\newtheorem{theorem}{Theorem}
\theoremstyle{plain}
\newtheorem{corollary}{Corollary}
\newtheorem{definition}{Definition}
\newtheorem{proposition}{Proposition}
\newtheorem{remark}{Remark}
\numberwithin{equation}{section}
\begin{document}
\title{Almost periodic generalized functions}
\author{Chikh Bouzar}
\address{Oran Essenia University, Algeria }
\email{ch.bouzar@gmail.com}
\author{Mohammed Taha Khalladi}
\address{University of Adrar, Algeria}
\email{ktaha2007@yahoo.fr }
\date{}
\subjclass{Primary 46F30; Secondary 42A75}
\keywords{Almost periodic functions; Almost periodic distributions;
Colombeau algebra; Almost periodic generalized functions}

\begin{abstract}
The aim of this paper is to introduce and to study an algebra of almost
periodic generalized functions containing the classical Bohr almost periodic
functions as well as almost periodic Schwartz distributions.
\end{abstract}

\maketitle

\section{Introduction}

The theory of uniformly almost periodic functions was introduced
and studied by H. Bohr, since then, many authors contributed to
the development of this theory. There exist three equivalent
definitions of uniformly almost periodic functions, the first
definition of H. Bohr, S. Bochner's definition and the definition
based on the approximation property, see \cite{001}. S. Bochner's
definition is more suitable for extension to Schwartz
distributions. L. Schwartz in \cite{006} introduced the basic
elements of almost periodic distributions.

The new generalized functions of \cite{002}, \cite{002'}, give a solution to
the problem of multiplication of distributions, these generalized functions
are currently the subject of many scientific works, see \cite{003} and \cite%
{004}.

The aim of this work is to introduce and to study an algebra of almost
periodic generalized functions containing the classical Bohr almost periodic
functions as well almost periodic Schwartz distributions.

\section{Almost periodic functions and distributions}

We consider functions and distributions defined on the whole one dimensional
space $\mathbb{R}$. Recall $\mathcal{C}_{b}$ the space of bounded and
continuous complex valued functions on $\mathbb{R}$ endowed with the norm $%
\left\Vert ~\right\Vert _{\infty }$ of uniform convergence on $\mathbb{R}$, $%
\left( \mathcal{C}_{b},\left\Vert ~\right\Vert _{\infty }\right) $ is a
Banach algebra.

\begin{definition}
(S. Bochner) A complex valued function \textrm{\ }$f$ defined and continuous
on $\mathbb{R}$ is called almost periodic, if for any sequence of real
numbers $\left( h_{n}\right) _{n}$ one can extract a subsequence $\left(
h_{n_{k}}\right) _{k}$ such that $\left( f\left( .+h_{n_{k}}\right) \right)
_{k}$ converges in $\left( \mathcal{C}_{b},\left\Vert ~\right\Vert _{\infty
}\right) $. Denoted by $\mathcal{C}_{ap}$ the space of almost periodic
functions.
\end{definition}

To recall Schwartz almost periodic distributions, we need some function
spaces, see \cite{006}. Let $p\in \left[ 1,+\infty \right] ,$ the space
\begin{equation*}
\mathcal{D}_{L^{p}}:=\left\{ \varphi \in \mathcal{C}^{\infty }:\varphi
^{\left( j\right) }\in L^{p},\forall j\in \mathbb{Z}_{+}\right\}
\end{equation*}%
endowed with the topology defined by the countable family of norms

\begin{equation*}
\left\vert \varphi \right\vert _{k,p}:=\underset{j\leq k}{\sum }\left\Vert
\varphi ^{\left( j\right) }\right\Vert _{L^{p}},\text{ }k\in \mathbb{Z}_{+},
\end{equation*}%
is a differential Frechet subalgebra of $\mathcal{C}^{\infty }.$ The
topological dual of $\mathcal{D}_{L^{1}},$ denoted by $\mathcal{D}%
_{L^{\infty }}^{\prime },$ is called the space of bounded distributions.

Let $h\in \mathbb{R}$ and $T\in \mathcal{D}^{\prime },$ the translate of $T$
by $h,$ denoted by $\tau _{h}T\ ,$ is defined as :%
\begin{equation*}
\left\langle \tau _{h}T\ ,\varphi \right\rangle =\left\langle T\ ,\tau
_{-h}\varphi \right\rangle ,\varphi \in \mathcal{D}\text{,}
\end{equation*}%
where $\tau _{-h}\varphi \left( x\right) =\varphi \left( x+h\right) .$

The definition and characterizations of an almost periodic distribution are
summarized in the following results.

\begin{theorem}
\label{Theor01}For any bounded distribution $T$ $\in $ $\mathcal{D}%
_{L^{\infty }}^{\prime },$ the following statements are equivalent :

i) The set $\left\{ \tau _{h}T,h\in \mathbb{R}\right\} $\ is relatively
compact in $\mathcal{D}_{L^{\infty }}^{\prime }.$

ii) $T\ast \varphi \in \mathcal{C}_{ap},\forall $ $\varphi \in \mathcal{D}.$

iii) $\exists $ $\left( f_{j}\right) _{j\leq k}\subset \mathcal{C}_{ap},$ $T=%
\underset{j\leq k}{\sum }f_{j}^{\left( j\right) }.$

$T$ $\in $ $\mathcal{D}_{L^{\infty }}^{\prime }$ is said almost periodic if
it satisfies any (hence every) of the above conditions.
\end{theorem}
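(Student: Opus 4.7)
The plan is to establish the three implications cyclically via (iii) $\Rightarrow$ (i) $\Rightarrow$ (ii) $\Rightarrow$ (iii). Two of these arrows are essentially continuity statements for convolution and translation; the closing one is the substantive part and hinges on an elliptic parametrix combined with the structure theorem for bounded distributions.

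For (iii) $\Rightarrow$ (i) I would invoke that translation commutes with differentiation, that by Bochner's definition each orbit $\{\tau_h f_j : h\in\mathbb{R}\}$ is relatively compact in $\mathcal{C}_b$, and that the embedding $\mathcal{C}_b\hookrightarrow\mathcal{D}'_{L^\infty}$ together with the continuity of differentiation on $\mathcal{D}'_{L^\infty}$ transports this relative compactness. A finite sum then preserves it, so $\{\tau_h T\}$ is relatively compact in $\mathcal{D}'_{L^\infty}$. For (i) $\Rightarrow$ (ii), I would fix $\varphi\in\mathcal{D}$ and use the intertwining identity $\tau_h(T\ast\varphi)=(\tau_h T)\ast\varphi$ together with the continuity of $S\mapsto S\ast\varphi$ from $\mathcal{D}'_{L^\infty}$ into $\mathcal{C}_b$. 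Relative compactness of $\{\tau_h T\}$ in $\mathcal{D}'_{L^\infty}$ then transfers to $\{\tau_h(T\ast\varphi)\}$ in $\mathcal{C}_b$, and Bochner's definition delivers $T\ast\varphi\in\mathcal{C}_{ap}$.

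The hard direction is (ii) $\Rightarrow$ (iii). My approach is an elliptic parametrix: take $E(x)=\tfrac{1}{2}e^{-|x|}$, the fundamental solution of $1-d^2/dx^2$, and for $N$ sufficiently large its $N$-fold convolution $E_N:=E^{\ast N}$, which is regular enough and satisfies $(1-d^2/dx^2)^N E_N=\delta$. Expanding the binomial,
\[
T \;=\; \sum_{j=0}^{N}(-1)^{j}\binom{N}{j}\bigl(E_N\ast T\bigr)^{(2j)},
\]
so it suffices to show $f:=E_N\ast T\in\mathcal{C}_{ap}$, as then the right-hand side is already in the form $\sum_{j\le 2N}f_j^{(j)}$ required by (iii). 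For this I would approximate $E_N$ by a sequence $\psi_n\in\mathcal{D}$ in a topology fine enough for the $\mathcal{D}'_{L^\infty}$--$\mathcal{D}_{L^1}$ duality to force $T\ast\psi_n\to f$ uniformly on $\mathbb{R}$; hypothesis (ii) gives $T\ast\psi_n\in\mathcal{C}_{ap}$, and since $\mathcal{C}_{ap}$ is closed under the uniform norm, $f\in\mathcal{C}_{ap}$.

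The chief technical obstacle is precisely this uniform approximation step: one must choose $N$ large enough that sufficiently many derivatives of $E_N$ lie in $L^1$ and can be simultaneously approximated there by test functions, so that the duality pairing of $T$ against $\tau_x\check{\psi}_n-\tau_x\check{E_N}$ tends to zero uniformly in $x$. Once this is in place, the remaining verifications are routine.
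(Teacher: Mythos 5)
The paper itself offers no proof of this theorem: it is recalled verbatim from Schwartz \cite{006}, so there is no in-paper argument to compare yours against. Judged on its own terms, your cyclic scheme (iii) $\Rightarrow$ (i) $\Rightarrow$ (ii) $\Rightarrow$ (iii) is correct and is essentially the classical argument. The two easy arrows are fine as you describe them: for (iii) $\Rightarrow$ (i) the embedding $\mathcal{C}_{b}\hookrightarrow \mathcal{D}_{L^{\infty }}^{\prime }$ and differentiation (the transpose of $-d/dx$ on $\mathcal{D}_{L^{1}}$) are continuous for the strong dual topology, and finite sums preserve relative compactness; for (i) $\Rightarrow$ (ii) the estimate $\sup_{x}\left\vert (S\ast \varphi )(x)\right\vert =\sup_{x}\left\vert \left\langle S,\tau _{x}\check{\varphi}\right\rangle \right\vert \leq p_{B}(S)$, with $B=\{\tau _{x}\check{\varphi}:x\in \mathbb{R}\}$ bounded in $\mathcal{D}_{L^{1}}$, gives the continuity of $S\mapsto S\ast \varphi$ into $\mathcal{C}_{b}$, and Bochner's criterion closes the argument. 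For the substantive arrow (ii) $\Rightarrow$ (iii), your parametrix route works, but the point you flag as the technical obstacle is exactly where the details must be pinned down: $E_{N}=E^{\ast N}$ has derivatives in $L^{1}$ only up to order $2N-1$, so $E_{N}\notin \mathcal{D}_{L^{1}}$, and the approximation must be run in a fixed Sobolev norm. Concretely, since $T\in \mathcal{D}_{L^{\infty }}^{\prime }$ one has $T=\sum_{j\leq k}g_{j}^{(j)}$ with $g_{j}\in L^{\infty }$ (structure theorem); choose $N$ with $2N-1\geq k$, approximate $E_{N}$ by $\psi _{n}\in \mathcal{D}$ in the $W^{k,1}$ norm, and then $\Vert T\ast \psi _{n}-T\ast E_{N}\Vert _{\infty }\leq \sum_{j\leq k}\Vert g_{j}\Vert _{\infty }\Vert (\psi _{n}-E_{N})^{(j)}\Vert _{L^{1}}\rightarrow 0$, so $f=E_{N}\ast T\in \mathcal{C}_{ap}$ by closedness of $\mathcal{C}_{ap}$ under uniform convergence. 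One should also justify the identity $T=\bigl(1-d^{2}/dx^{2}\bigr)^{N}(E_{N}\ast T)$, either by the convolvability of $\mathcal{D}_{L^{1}}^{\prime }$ with $\mathcal{D}_{L^{\infty }}^{\prime }$ (Schwartz's convolution calculus, which gives associativity with $\delta =(1-d^{2}/dx^{2})^{N}E_{N}$) or directly from the representation $T=\sum_{j\leq k}g_{j}^{(j)}$; and note that your decomposition yields $f_{2j}=(-1)^{j}\binom{N}{j}f$, $f_{2j+1}=0$, which is of the form required in (iii) since constant multiples of almost periodic functions are almost periodic. With these standard completions your proof is sound, and it buys something the paper does not provide: a self-contained argument rather than a citation.
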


\begin{definition}
The space of almost periodic distributions is denoted by $\mathcal{B}%
_{ap}^{\prime }$.
\end{definition}

Let recall the space of regular almost periodic functions.

\begin{definition}
The space of almost periodic infinitely differentiable functions on $\mathbb{%
R}$ is defined and denoted by%
\begin{equation*}
\mathcal{B}_{ap}=\left\{ \varphi \in \mathcal{D}_{L^{\infty }}:\varphi
^{\left( j\right) }\in \mathcal{C}_{ap\text{ }},\forall j\in \mathbb{Z}%
_{+}\right\} .
\end{equation*}
\end{definition}

Some, easy to prove, properties of $\mathcal{B}_{ap}$\ are given
in the following assertions.

\begin{proposition}
\label{prpr001}We have

$i)$ $\mathcal{B}_{ap}$ is a closed differential subalgebra of $\mathcal{D}%
_{L^{\infty }}.$

$ii)$ If $T\in \mathcal{B}_{ap}^{\prime }$ and $\varphi \in \mathcal{B}_{ap%
\text{ }}$, then $\varphi T\in \mathcal{B}_{ap}^{\prime }.$

$iii)$ $\mathcal{B}_{ap\text{ }}\ast L^{1}\subset \mathcal{B}_{ap\text{ }}.$

$iv)$ $\mathcal{B}_{ap\text{ }}=\mathcal{D}_{L^{\infty }}\cap \mathcal{C}_{ap%
\text{ }}.$
\end{proposition}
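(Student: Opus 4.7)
The proof splits into the four items and relies on three classical facts about $\mathcal{C}_{ap}$: it is a subalgebra of $\mathcal{C}_{b}$ closed under uniform limits; $\mathcal{C}_{ap}\ast L^{1}\subset \mathcal{C}_{ap}$; and Bohr's theorem that $f\in \mathcal{C}_{ap}$ with $f'$ uniformly continuous forces $f'\in \mathcal{C}_{ap}$. These will be combined with the characterisations in Theorem \ref{Theor01}.

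For $(i)$, stability under differentiation is immediate since $(\varphi ')^{(j)}=\varphi ^{(j+1)}$. The algebra property follows from the Leibniz formula
\begin{equation*}
(\varphi \psi )^{(j)}=\sum_{k=0}^{j}\binom{j}{k}\varphi ^{(k)}\psi ^{(j-k)},
\end{equation*}
combined with the product stability of $\mathcal{C}_{ap}$. Closedness in $\mathcal{D}_{L^{\infty }}$ reduces to closedness of $\mathcal{C}_{ap}$ in $(\mathcal{C}_{b},\|\cdot \|_{\infty })$, since convergence in $\mathcal{D}_{L^{\infty }}$ entails uniform convergence of every derivative. Item $(iii)$ is equally direct: the identity $(\varphi \ast f)^{(j)}=\varphi ^{(j)}\ast f$ together with $\|\varphi ^{(j)}\ast f\|_{\infty }\leq \|\varphi ^{(j)}\|_{\infty }\|f\|_{1}$ and $\mathcal{C}_{ap}\ast L^{1}\subset \mathcal{C}_{ap}$ give $\varphi \ast f\in \mathcal{B}_{ap}$.

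For $(ii)$, I would use the characterisation $(iii)$ of Theorem \ref{Theor01} to write $T=\sum_{j\leq k}f_{j}^{(j)}$ with $f_{j}\in \mathcal{C}_{ap}$. A short induction on $j$ based on the distributional product rule $(\varphi S)'=\varphi 'S+\varphi S'$ yields
\begin{equation*}
\varphi f_{j}^{(j)}=\sum_{m=0}^{j}(-1)^{m}\binom{j}{m}\bigl(\varphi ^{(m)}f_{j}\bigr)^{(j-m)}.
\end{equation*}
By $(i)$ each $\varphi ^{(m)}\in \mathcal{C}_{ap}$, so $\varphi ^{(m)}f_{j}\in \mathcal{C}_{ap}$ and $\varphi T$ becomes a finite sum of derivatives of almost periodic functions; the converse implication of Theorem \ref{Theor01}$(iii)$ then gives $\varphi T\in \mathcal{B}_{ap}^{\prime }$.

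For $(iv)$, the inclusion $\mathcal{B}_{ap}\subset \mathcal{D}_{L^{\infty }}\cap \mathcal{C}_{ap}$ is immediate by taking $j=0$ in the definition. For the converse, given $\varphi \in \mathcal{D}_{L^{\infty }}\cap \mathcal{C}_{ap}$, I would prove $\varphi ^{(j)}\in \mathcal{C}_{ap}$ by induction on $j$: the base case is the hypothesis, and for the inductive step boundedness of $\varphi ^{(j+2)}$ makes $\varphi ^{(j+1)}$ Lipschitz hence uniformly continuous, so Bohr's differentiation theorem applied to $\varphi ^{(j)}\in \mathcal{C}_{ap}$ upgrades $\varphi ^{(j+1)}$ to an element of $\mathcal{C}_{ap}$. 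This appeal to Bohr's theorem is the only non-routine ingredient in the proposition and is where I expect the main difficulty to lie, with the Leibniz bookkeeping in $(ii)$ a distant second.
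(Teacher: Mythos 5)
The paper offers no proof of this proposition at all --- it is introduced with the words ``Some, easy to prove, properties'' --- so there is no argument of the authors to compare yours against; judged on its own, your proof is correct and complete. Items $(i)$ and $(iii)$ are exactly the routine reductions to the classical facts that $\mathcal{C}_{ap}$ is a uniformly closed subalgebra of $\mathcal{C}_{b}$ and that $\mathcal{C}_{ap}\ast L^{1}\subset \mathcal{C}_{ap}$, together with Young's inequality and termwise differentiation of the convolution. In $(ii)$ the identity $\varphi f_{j}^{(j)}=\sum_{m=0}^{j}(-1)^{m}\binom{j}{m}\bigl(\varphi ^{(m)}f_{j}\bigr)^{(j-m)}$ is the right one; the only step worth making explicit is the final bookkeeping: after summing over $j$ you regroup by order of derivative, so that $\varphi T=\sum_{r\leq k}g_{r}^{(r)}$ with each $g_{r}$ a finite linear combination of products $\varphi ^{(m)}f_{j}\in \mathcal{C}_{ap}$ (using that $\mathcal{C}_{ap}$ is an algebra), which is precisely the form demanded by Theorem \ref{Theor01} $(iii)$. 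For $(iv)$, your induction via Bohr's differentiation theorem (an almost periodic function whose derivative is uniformly continuous has almost periodic derivative, the uniform continuity being supplied by boundedness of the next derivative) is valid. A variant that avoids Bohr's theorem and stays closer to the convolution toolkit the paper uses elsewhere: for $\varphi \in \mathcal{D}_{L^{\infty }}\cap \mathcal{C}_{ap}$ and a delta-sequence $(\rho _{n})_{n}\subset \mathcal{D}$, one has $\varphi ^{(j)}\ast \rho _{n}=\varphi \ast \rho _{n}^{(j)}\in \mathcal{C}_{ap}$ by $\mathcal{C}_{ap}\ast L^{1}\subset \mathcal{C}_{ap}$, while $\varphi ^{(j)}\ast \rho _{n}\rightarrow \varphi ^{(j)}$ uniformly because $\varphi ^{(j)}$ is Lipschitz (boundedness of $\varphi ^{(j+1)}$); uniform closedness of $\mathcal{C}_{ap}$ then gives $\varphi ^{(j)}\in \mathcal{C}_{ap}$. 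Either route works, and your write-up is acceptable as it stands.
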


As a consequence of $\left( \text{iv}\right) ,$ we have the following result.

\begin{corollary}
If $v\in \mathcal{D}_{L^{\infty }}$ and $v\ast \varphi \in \mathcal{C}%
_{ap},\forall \varphi \in \mathcal{D},$ then $v\in \mathcal{B}_{ap}.$
\end{corollary}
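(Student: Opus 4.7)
The plan is to deduce $v\in\mathcal{B}_{ap}$ from Proposition \ref{prpr001}(iv): it suffices to show that $v\in\mathcal{C}_{ap}$, since $v\in\mathcal{D}_{L^{\infty}}$ is given. The strategy is to approximate $v$ uniformly by functions already known to be almost periodic, and then use the fact that $\mathcal{C}_{ap}$ is closed in $(\mathcal{C}_{b},\|\cdot\|_{\infty})$.

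First I would pick a standard mollifier, that is, a sequence $(\rho_{n})_{n}\subset\mathcal{D}$ with $\rho_{n}\geq 0$, $\int\rho_{n}=1$ and $\operatorname{supp}\rho_{n}\subset\left[-\tfrac{1}{n},\tfrac{1}{n}\right]$. Since each $\rho_{n}\in\mathcal{D}$, the hypothesis gives $v\ast\rho_{n}\in\mathcal{C}_{ap}$ for every $n$.

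Next I would show that $v\ast\rho_{n}\to v$ in $(\mathcal{C}_{b},\|\cdot\|_{\infty})$. This is the key step, and relies on the fact that $v\in\mathcal{D}_{L^{\infty}}$ implies $v'\in L^{\infty}(\mathbb{R})$, so $v$ is Lipschitz, hence uniformly continuous and bounded on $\mathbb{R}$. Writing
\begin{equation*}
(v\ast\rho_{n})(x)-v(x)=\int\bigl(v(x-y)-v(x)\bigr)\rho_{n}(y)\,dy,
\end{equation*}
and using $|v(x-y)-v(x)|\leq\|v'\|_{\infty}|y|$ on $\operatorname{supp}\rho_{n}$, one obtains $\|v\ast\rho_{n}-v\|_{\infty}\leq\|v'\|_{\infty}/n\to 0$, uniformly in $x$.

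Finally, since $\mathcal{C}_{ap}$ is a closed subspace of $(\mathcal{C}_{b},\|\cdot\|_{\infty})$ (a standard fact about Bohr almost periodic functions), the uniform limit $v$ of the sequence $(v\ast\rho_{n})\subset\mathcal{C}_{ap}$ lies in $\mathcal{C}_{ap}$. Combining $v\in\mathcal{D}_{L^{\infty}}$ with $v\in\mathcal{C}_{ap}$ and invoking Proposition \ref{prpr001}(iv) yields $v\in\mathcal{B}_{ap}$. The only mildly delicate point is the uniform approximation step above; everything else is bookkeeping.
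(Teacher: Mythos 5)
Your proof is correct and follows the route the paper itself indicates: the paper states the corollary as an immediate consequence of Proposition \ref{prpr001}(iv), leaving implicit exactly the step you supply, namely that $v\in\mathcal{C}_{ap}$ because $v\ast\rho_{n}\in\mathcal{C}_{ap}$ converges uniformly to $v$ (using $v'\in L^{\infty}$) and $\mathcal{C}_{ap}$ is closed in $(\mathcal{C}_{b},\Vert\cdot\Vert_{\infty})$. Nothing further is needed.
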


\begin{remark}
It is important to mention that $\mathcal{B}_{ap}\mathcal{\subsetneqq }%
\mathcal{C}^{\infty }\cap \mathcal{C}_{ap}.$
\end{remark}

\section{Almost periodic generalized functions}

Let $I=\left] 0,1\right] $ and%
\begin{equation*}
\mathcal{M}_{L^{\infty }}=\left\{ \left( u_{\varepsilon }\right)
_{\varepsilon }\in \left( \mathcal{D}_{L^{\infty }}\right) ^{I},\forall k\in
\mathbb{Z}_{+},\exists m\in \mathbb{Z}_{+},\text{ }\left\vert u_{\varepsilon
}\right\vert _{k,\infty }=O\left( \varepsilon ^{-m}\right) ,\text{ }%
\varepsilon \longrightarrow 0\right\}
\end{equation*}%
\begin{equation*}
\mathcal{N}_{L^{\infty }}=\left\{ \left( u_{\varepsilon }\right)
_{\varepsilon }\in \left( \mathcal{D}_{L^{\infty }}\right) ^{I},\forall k\in
\mathbb{Z}_{+},\forall m\in \mathbb{Z}_{+},\text{ }\left\vert u_{\varepsilon
}\right\vert _{k,\infty }=O\left( \varepsilon ^{m}\right) ,\text{ }%
\varepsilon \longrightarrow 0\right\}
\end{equation*}

\begin{definition}
The algebra of bounded generalized functions, denoted by $\mathcal{G}%
_{L^{\infty }},$ is defined by the quotient%
\begin{equation*}
\mathcal{G}_{L^{\infty }}=\frac{\mathcal{M}_{L^{\infty }}}{\mathcal{N}%
_{L^{\infty }}}
\end{equation*}
\end{definition}

Define%
\begin{equation}
\begin{array}{c}
\mathcal{M}_{ap}=\left\{ \left( u_{\varepsilon }\right) _{\varepsilon }\in
\left( \mathcal{B}_{ap}\right) ^{I},\forall k\in \mathbb{Z}_{+},\exists m\in
\mathbb{Z}_{+},\left\vert u_{\varepsilon }\right\vert _{k,\infty }=O\left(
\varepsilon ^{-m}\right) ,\varepsilon \longrightarrow 0\right\} \\
\mathcal{N}_{ap}=\left\{ \left( u_{\varepsilon }\right) _{\varepsilon }\in
\left( \mathcal{B}_{ap}\right) ^{I},\forall k\in \mathbb{Z}_{+},\forall m\in
\mathbb{Z}_{+},\left\vert u_{\varepsilon }\right\vert _{k,\infty }=O\left(
\varepsilon ^{m}\right) ,\varepsilon \longrightarrow 0\right\}%
\end{array}
\label{ES01}
\end{equation}

The properties of $\mathcal{M}_{ap}$ and $\mathcal{N}_{ap}$ are summarized
in the following proposition.

\begin{proposition}
\label{pimah1}$i)$ The space $\mathcal{M}_{ap}$ is a subalgebra of $\left(
\mathcal{B}_{ap}\right) ^{I}.$

$ii)$ the space $\mathcal{N}_{ap}$ is an ideal of $\mathcal{M}_{ap}.$
\end{proposition}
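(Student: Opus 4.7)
The plan is to invoke Proposition \ref{prpr001}(i) to keep all pointwise (in $\varepsilon$) operations inside $\mathcal{B}_{ap}$, and then to translate the stated growth conditions through two routine estimates: additivity of seminorms and a Leibniz-type submultiplicative inequality. Nothing about almost periodicity itself reappears inside the proof once we have the closed subalgebra structure of $\mathcal{B}_{ap}$; the argument is purely about the asymptotic behaviour of the seminorms $|\,\cdot\,|_{k,\infty}$.

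First I would check that $\mathcal{M}_{ap}$ is a subalgebra of $(\mathcal{B}_{ap})^{I}$. Closure under addition and scalar multiplication is immediate from $|u_{\varepsilon}+v_{\varepsilon}|_{k,\infty}\leq|u_{\varepsilon}|_{k,\infty}+|v_{\varepsilon}|_{k,\infty}$, so if the two summands are $O(\varepsilon^{-m_{1}})$ and $O(\varepsilon^{-m_{2}})$ the sum is $O(\varepsilon^{-\max(m_{1},m_{2})})$. For closure under the pointwise product, the key step is the Leibniz-type estimate
\begin{equation*}
|u_{\varepsilon}v_{\varepsilon}|_{k,\infty}=\sum_{j\leq k}\Bigl\Vert\sum_{l=0}^{j}\binom{j}{l}u_{\varepsilon}^{(l)}v_{\varepsilon}^{(j-l)}\Bigr\Vert_{\infty}\leq C_{k}\,|u_{\varepsilon}|_{k,\infty}\,|v_{\varepsilon}|_{k,\infty},
\end{equation*}
which follows from the Banach algebra structure of $(\mathcal{C}_{b},\Vert\,\cdot\,\Vert_{\infty})$. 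Applied termwise, this yields moderateness of $(u_{\varepsilon}v_{\varepsilon})_{\varepsilon}$ with exponent $m_{1}+m_{2}$, and the fact that $u_{\varepsilon}v_{\varepsilon}\in\mathcal{B}_{ap}$ for each $\varepsilon$ is exactly the subalgebra statement in Proposition \ref{prpr001}(i).

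For part (ii), I would first observe that $\mathcal{N}_{ap}\subset\mathcal{M}_{ap}$ trivially (take any fixed $m=0$), and that $\mathcal{N}_{ap}$ is a vector subspace, again by the additivity of $|\,\cdot\,|_{k,\infty}$. The substantive point is absorption: given $(u_{\varepsilon})_{\varepsilon}\in\mathcal{N}_{ap}$ and $(v_{\varepsilon})_{\varepsilon}\in\mathcal{M}_{ap}$, we must show $(u_{\varepsilon}v_{\varepsilon})_{\varepsilon}\in\mathcal{N}_{ap}$. Fix $k$; by moderateness there is an exponent $m_{0}$ with $|v_{\varepsilon}|_{k,\infty}=O(\varepsilon^{-m_{0}})$. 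For an arbitrary $m\in\mathbb{Z}_{+}$, the negligibility of $(u_{\varepsilon})_{\varepsilon}$ at order $m+m_{0}$ combined with the Leibniz-type inequality above gives
\begin{equation*}
|u_{\varepsilon}v_{\varepsilon}|_{k,\infty}\leq C_{k}\,|u_{\varepsilon}|_{k,\infty}\,|v_{\varepsilon}|_{k,\infty}=O(\varepsilon^{m+m_{0}})\cdot O(\varepsilon^{-m_{0}})=O(\varepsilon^{m}),
\end{equation*}
which is exactly the negligibility condition.

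No step presents any real obstacle; the only mild care is in the order of quantifiers for the ideal property, where one must first fix $k$ and extract the exponent $m_{0}$ for $v_{\varepsilon}$ from the moderateness condition, and only then match the exponent $m+m_{0}$ in the negligibility condition of $u_{\varepsilon}$. Once this quantifier juggling is set up, the Leibniz inequality closes the argument.
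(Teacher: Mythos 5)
Your proposal is correct and follows essentially the same route as the paper: part (i) reduced to the algebra structure of $\mathcal{B}_{ap}$ (Proposition \ref{prpr001}), and part (ii) handled by the same quantifier order (fix $k$, extract the moderateness exponent for $v_{\varepsilon}$, then use negligibility of $u_{\varepsilon}$ at the shifted order) together with the submultiplicative estimate $\left\vert u_{\varepsilon }v_{\varepsilon }\right\vert _{k,\infty }\leq C_{k}\left\vert u_{\varepsilon }\right\vert _{k,\infty }\left\vert v_{\varepsilon }\right\vert _{k,\infty }$, which you merely make explicit via Leibniz instead of citing the compatibility of the norms with the algebra structure.
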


\begin{proof}
$i)$ It follows from the fact that $\mathcal{B}_{ap}$ is an differential
algebra.

$ii)$ Let $\left( u_{\varepsilon }\right) _{\varepsilon }\in \mathcal{N}%
_{ap} $ and $\left( v_{\varepsilon }\right) _{\varepsilon }\in \mathcal{M}%
_{ap},$ we have
\begin{equation*}
\forall k\in \mathbb{Z}_{+},\exists m^{\prime }\in \mathbb{Z}_{+},\exists
c_{1}>0,\exists \varepsilon _{0}\in I,\forall \varepsilon <\varepsilon
_{0},\left\vert v_{\varepsilon }\right\vert _{k,\infty }<c_{1}\varepsilon
^{-m^{\prime }}.
\end{equation*}%
Take $m\in \mathbb{Z}_{+},$ then for $m^{\prime \prime }=m+m^{\prime },$ $%
\exists c_{2}>0$ such that $\left\vert u_{\varepsilon }\right\vert
_{k,\infty }<c_{2}\varepsilon ^{m^{\prime \prime }}.$ Since the family of
the norms $\left\vert u_{\varepsilon }\right\vert _{k,\infty }$\ is
compatible with the algebraic structure of $\mathcal{D}_{L^{\infty }}$, then
$\forall k\in \mathbb{Z}_{+},$ $\exists c_{k}>0$ such that
\begin{equation*}
\left\vert u_{\varepsilon }v_{\varepsilon }\right\vert _{k,\infty }\leq c_{k}%
\text{ }\left\vert u_{\varepsilon }\right\vert _{k,\infty }\text{ }%
\left\vert v_{\varepsilon }\right\vert _{k,\infty },
\end{equation*}%
consequently%
\begin{equation*}
\left\vert u_{\varepsilon }v_{\varepsilon }\right\vert _{k,\infty
}<c_{k}c_{2}\varepsilon ^{m^{^{\prime \prime }}}c_{1}\varepsilon
^{-m^{\prime }}\leq C\varepsilon ^{m},\text{ where }C=c_{1}c_{2}c_{k}.
\end{equation*}%
Hence $\left( u_{\varepsilon }v_{\varepsilon }\right) _{\varepsilon }\in
\mathcal{N}_{ap}.$
\end{proof}

The following definition introduces the algebra of almost periodic
generalized functions.

\begin{definition}
The algebra of almost periodic generalized functions is the quotient algebra%
\begin{equation*}
\mathcal{G}_{ap}=\frac{\mathcal{M}_{ap}}{\mathcal{N}_{ap}}
\end{equation*}
\end{definition}

We have a characterization of elements of $\mathcal{G}_{ap}$ similar to the
result $\left( ii\right) $ of theorem $\left( \ref{Theor01}\right) $ for
almost periodic distributions$.$

\begin{theorem}
Let $u=\left[ \left( u_{\varepsilon }\right) _{\varepsilon }\right] \in
\mathcal{G}_{L^{\infty }},$ the following assertions are equivalent :

$i)$ $u$ is almost periodic.

$ii)$ $u_{\varepsilon }\ast \varphi \in \mathcal{B}_{ap},$ $\forall
\varepsilon \in I,\forall \varphi \in \mathcal{D}.$
\end{theorem}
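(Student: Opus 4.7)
The plan is to prove both directions by choosing suitable representatives of $u$ and invoking the results on $\mathcal{B}_{ap}$ already established above, in particular Proposition~\ref{prpr001}(iii) and the Corollary that follows it. I read condition $(ii)$ existentially in the representative of $u$, as I explain at the end.

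For $(i) \Rightarrow (ii)$: by hypothesis, $u$ has some representative $(v_\varepsilon)_\varepsilon \in \mathcal{M}_{ap}$, so each $v_\varepsilon$ already lies in $\mathcal{B}_{ap}$. Since $\mathcal{D} \subset L^1$, Proposition~\ref{prpr001}(iii) yields $v_\varepsilon \ast \varphi \in \mathcal{B}_{ap}$ for every $\varepsilon \in I$ and every $\varphi \in \mathcal{D}$. This direction is essentially immediate.

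For $(ii) \Rightarrow (i)$: let $(u_\varepsilon)_\varepsilon$ be a representative of $u$ in $\mathcal{M}_{L^\infty}$ for which $(ii)$ holds. Fixing $\varepsilon$, we have $u_\varepsilon \in \mathcal{D}_{L^\infty}$, and $u_\varepsilon \ast \varphi \in \mathcal{B}_{ap} \subset \mathcal{C}_{ap}$ for every $\varphi \in \mathcal{D}$ by hypothesis. The Corollary stated just after Proposition~\ref{prpr001} applies, giving $u_\varepsilon \in \mathcal{B}_{ap}$ for every $\varepsilon$. Because the moderation estimates $\left\vert u_\varepsilon\right\vert_{k,\infty} = O(\varepsilon^{-m})$ already hold (membership in $\mathcal{M}_{L^\infty}$) and are the only further requirement for $\mathcal{M}_{ap}$, the net $(u_\varepsilon)_\varepsilon$ lies in $\mathcal{M}_{ap}$, so $u \in \mathcal{G}_{ap}$.

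The main obstacle is really a subtlety rather than an analytical difficulty: condition $(ii)$ is \emph{not} invariant under $\mathcal{N}_{L^\infty}$-perturbations. Adding a negligible net $(n_\varepsilon)_\varepsilon \in \mathcal{N}_{L^\infty}$ whose individual terms $n_\varepsilon$ do not lie in $\mathcal{B}_{ap}$ typically destroys the property $(u_\varepsilon + n_\varepsilon) \ast \varphi \in \mathcal{B}_{ap}$, because $\mathcal{B}_{ap}$ is a proper closed subalgebra of $\mathcal{D}_{L^\infty}$ and convolution with $\varphi \in \mathcal{D}$ only guarantees an element of $\mathcal{D}_{L^\infty}$. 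Consequently both implications must be read as ``there exists a representative such that $\dots$'', and the proof simply selects the representative appropriate for each direction; no $\varepsilon$-uniform version of the Corollary is needed, since the moderation bounds transfer for free.
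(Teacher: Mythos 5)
Your proof is correct and follows essentially the same route as the paper: for $(i)\Rightarrow(ii)$ you use a representative in $\mathcal{M}_{ap}$ together with $\mathcal{B}_{ap}\ast L^{1}\subset \mathcal{B}_{ap}$, and for $(ii)\Rightarrow(i)$ you apply the Corollary following Proposition \ref{prpr001} to each $u_{\varepsilon}$ and then transfer the moderateness estimates already guaranteed by $u\in \mathcal{G}_{L^{\infty }}$ (the paper cites Theorem \ref{Theor01} $(ii)$ at that step, to the same effect). Your closing caveat is also sound and in fact more careful than the paper's own remark: since a negligible net of nonzero compactly supported functions (e.g. $e^{-1/\varepsilon }g$ with $g\in \mathcal{D}$, $g\neq 0$) satisfies the $\mathcal{N}_{L^{\infty }}$ estimates while $e^{-1/\varepsilon }\left( g\ast \varphi \right) \notin \mathcal{C}_{ap}$, condition $(ii)$ as a pointwise-in-$\varepsilon$ membership statement is genuinely representative-dependent, so the equivalence must indeed be read via a suitable choice of representative.
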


\begin{proof}
$i)\Longrightarrow ii)$ If $u\in \mathcal{G}_{ap},$ so for every $%
\varepsilon \in I$ we have $u_{\varepsilon }\in \mathcal{B}_{ap},$ then $%
u_{\varepsilon }\ast \varphi \in \mathcal{B}_{ap},\forall \varepsilon \in
I,\forall \varphi \in \mathcal{D}.$

$ii)\Longrightarrow i)$ Let $\left( u_{\varepsilon }\right) _{\varepsilon
}\in \mathcal{M}_{L^{\infty }}$ and $u_{\varepsilon }\ast \varphi \in
\mathcal{B}_{ap},$ $\forall \varepsilon \in I,\forall \varphi \in \mathcal{D}%
,$ therefor $u_{\varepsilon }\in \mathcal{B}_{ap}$ follows from theorem $%
\left( \ref{Theor01}\right) $ $\left( ii\right) $; it suffices to show that%
\begin{equation*}
\forall k\in \mathbb{Z}_{+},\exists m\in \mathbb{Z}_{+},\left\vert
u_{\varepsilon }\right\vert _{k,\infty }=O\left( \varepsilon ^{-m}\right)
,\varepsilon \longrightarrow 0,
\end{equation*}%
which follows from the fact that $u\in \mathcal{G}_{L^{\infty }}.$
\end{proof}

\begin{remark}
The characterization $\left( ii\right) $ does not depend on representatives.
\end{remark}

\begin{definition}
Denote by $\Sigma $ the subset of functions $\rho \in \mathcal{S}$ satisfying%
\begin{equation*}
\int \rho \left( x\right) dx=1\text{ and }\int x^{k}\rho \left( x\right)
dx=0,\forall k=1,2,...
\end{equation*}
\end{definition}

Set $\rho _{\varepsilon }\left( .\right) =\frac{1}{\varepsilon }\rho \left(
\frac{.}{\varepsilon }\right) ,\varepsilon >0.$

\begin{proposition}
Let $\rho \in \Sigma ,$ the map%
\begin{equation*}
\begin{array}{cccc}
i_{ap}: & \mathcal{B}_{ap}^{\prime } & \longrightarrow & \mathcal{G}_{ap} \\
& u & \longrightarrow & \left( u\ast \rho _{\varepsilon }\right)
_{\varepsilon }+\mathcal{N}_{ap},%
\end{array}%
\end{equation*}

is a linear embedding which commutes with derivatives.
\end{proposition}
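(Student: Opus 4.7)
The plan is to verify four things in turn: that $i_{ap}$ lands in $\mathcal{G}_{ap}$, linearity, commutation with derivatives, and injectivity. The first claim is the one requiring the most work; injectivity is where the mollifier hypothesis $\rho\in\Sigma$ actually enters.

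To see that $(u\ast\rho_{\varepsilon})_{\varepsilon}\in\mathcal{M}_{ap}$, I would use the structural representation from Theorem \ref{Theor01} (iii): write $u=\sum_{j\le N}f_{j}^{(j)}$ with $f_{j}\in\mathcal{C}_{ap}$. Then $u\ast\rho_{\varepsilon}=\sum_{j\le N}f_{j}\ast\rho_{\varepsilon}^{(j)}$, and each $\rho_{\varepsilon}^{(j)}\in L^{1}$. Since $\mathcal{C}_{ap}\ast L^{1}\subset\mathcal{C}_{ap}$ (the analogue, at the continuous level, of Proposition \ref{prpr001} (iii)), each summand is almost periodic; differentiating under the convolution shows $(u\ast\rho_{\varepsilon})^{(l)}=\sum_{j\le N}f_{j}\ast\rho_{\varepsilon}^{(j+l)}$ is in $\mathcal{C}_{ap}$ for every $l$, so $u\ast\rho_{\varepsilon}\in\mathcal{B}_{ap}$. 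For the moderate estimate, Young's inequality gives
\begin{equation*}
\bigl\|(u\ast\rho_{\varepsilon})^{(l)}\bigr\|_{\infty}\le\sum_{j\le N}\|f_{j}\|_{\infty}\,\|\rho^{(j+l)}\|_{L^{1}}\,\varepsilon^{-(j+l)},
\end{equation*}
hence $|u\ast\rho_{\varepsilon}|_{k,\infty}=O(\varepsilon^{-(N+k)})$ and $(u\ast\rho_{\varepsilon})_{\varepsilon}\in\mathcal{M}_{ap}$, so $i_{ap}(u)\in\mathcal{G}_{ap}$.

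Linearity is immediate from linearity of convolution, and commutation with differentiation reduces to the representative identity $(u\ast\rho_{\varepsilon})'=u'\ast\rho_{\varepsilon}$, together with the fact that derivation in $\mathcal{G}_{ap}$ is defined termwise on representatives; one only has to note that the estimate above stays moderate after differentiation, which it obviously does.

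The main obstacle is injectivity, and this is where $\rho\in\Sigma$ is used. Suppose $i_{ap}(u)=0$, i.e.\ $(u\ast\rho_{\varepsilon})_{\varepsilon}\in\mathcal{N}_{ap}$. In particular $\|u\ast\rho_{\varepsilon}\|_{\infty}\to 0$, so for any $\varphi\in\mathcal{D}$ one has $\langle u\ast\rho_{\varepsilon},\varphi\rangle\to 0$. On the other hand the condition $\int\rho=1$ (together with $\rho\in\mathcal{S}$) is sufficient to guarantee that $u\ast\rho_{\varepsilon}\to u$ in $\mathcal{D}'$, via the usual mollification argument: $\langle u\ast\rho_{\varepsilon},\varphi\rangle=\langle u,\check{\rho}_{\varepsilon}\ast\varphi\rangle$ and $\check{\rho}_{\varepsilon}\ast\varphi\to\varphi$ in $\mathcal{D}_{L^{1}}$, which pairs continuously against $u\in\mathcal{B}_{ap}'\subset\mathcal{D}_{L^{\infty}}'$. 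Hence $\langle u,\varphi\rangle=0$ for every $\varphi\in\mathcal{D}$ and $u=0$; the higher moment conditions in $\Sigma$ are not needed for injectivity itself (they ensure compatibility with the smooth embedding), but the vanishing first moment-condition $\int\rho=1$ is essential.
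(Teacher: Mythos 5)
Your proof is correct and follows essentially the same route as the paper: the structure theorem $u=\sum_{j\le N}f_j^{(j)}$ with $f_j\in\mathcal{C}_{ap}$ for the moderate estimate (the paper's explicit integral bound is just your Young inequality written out), and injectivity via $u\ast\rho_\varepsilon\to u$ in $\mathcal{D}'_{L^\infty}$ while $u\ast\rho_\varepsilon\to 0$ from the negligibility hypothesis. The only addition is that you verify explicitly that $u\ast\rho_\varepsilon\in\mathcal{B}_{ap}$, a point the paper leaves implicit; this is a welcome but not substantively different step.
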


\begin{proof}
Let $u\in \mathcal{B}_{ap}^{\prime },$ by characterization of almost
periodic distributions we have $u=\underset{\beta \leq m}{\sum }f_{\beta
}^{\left( \beta \right) },$ where $f_{\beta }\in \mathcal{C}_{ap},$ so $%
\forall \alpha \in \mathbb{Z}$,%
\begin{equation*}
\left\vert \left( u^{\left( \alpha \right) }\ast \rho _{\varepsilon }\right)
\left( x\right) \right\vert \leq \underset{\beta \leq m}{\sum }\frac{1}{%
\varepsilon ^{\alpha +\beta }}\underset{\mathbb{R}}{\int }\left\vert
f_{\beta }\left( x-\varepsilon y\right) \rho ^{\left( \alpha +\beta \right)
}\left( y\right) \right\vert dy,
\end{equation*}%
consequently, there exists $c>0$ such that%
\begin{equation*}
\underset{x\in \mathbb{R}}{\sup }\left\vert \left( u^{\left( \alpha \right)
}\ast \rho _{\varepsilon }\right) \left( x\right) \right\vert \leq \underset{%
\beta \leq m}{\sum }\frac{1}{\varepsilon ^{\alpha +\beta }}\left\Vert
f_{\beta }\right\Vert _{L^{\infty }\left( \mathbb{R}\right) }\underset{%
\mathbb{R}}{\int }\left\vert \rho ^{\left( \alpha +\beta \right) }\left(
y\right) \right\vert dy\leq \frac{c}{\varepsilon ^{\alpha +m}},
\end{equation*}%
i.e.%
\begin{equation*}
\left\vert u\ast \rho _{\varepsilon }\right\vert _{m^{\prime },\infty }=%
\underset{\alpha \leq m^{\prime }}{\sum }\underset{x\in \mathbb{R}}{\sup }%
\left\vert \left( u^{\left( \alpha \right) }\ast \rho _{\varepsilon }\right)
\left( x\right) \right\vert \leq \frac{c^{\prime }}{\varepsilon
^{m+m^{\prime }}}\text{, }c^{\prime }=\underset{\alpha \leq m^{\prime }}{%
\sum }\frac{c}{\varepsilon ^{\alpha }},
\end{equation*}%
this shows that $\left( u\ast \rho _{\varepsilon }\right) _{\varepsilon }\in
\mathcal{M}_{ap}.$ Let $\left( u\ast \rho _{\varepsilon }\right)
_{\varepsilon }\in \mathcal{N}_{ap},$ then $\underset{\varepsilon
\longrightarrow 0}{\lim }u\ast \rho _{\varepsilon }=0$ in $\mathcal{D}%
_{L^{\infty }}^{\prime },$ but $\underset{\varepsilon \longrightarrow 0}{%
\lim }u\ast \rho _{\varepsilon }=u$ in $\mathcal{D}_{L^{\infty }}^{\prime },$
this shows that $i_{ap}$ is an embedding. Finally we note that $i_{ap}$ is
linear, this results from the fact that the convolution is linear and that $%
i_{ap}\left( w^{\left( j\right) }\right) =\left( w^{\left( j\right) }\ast
\rho _{\varepsilon }\right) _{\varepsilon }=\left( w\ast \rho _{\varepsilon
}\right) _{\varepsilon }^{\left( j\right) }=\left( i_{ap}\left( w\right)
\right) ^{\left( j\right) }.$
\end{proof}

The space $\mathcal{B}_{ap}\mathcal{\ }$is embedded into $\mathcal{G}_{ap}$
canonically, i.e.%
\begin{equation*}
\begin{array}{cccc}
\sigma _{ap}: & \mathcal{B}_{ap} & \longrightarrow & \mathcal{G}_{ap} \\
& f & \longrightarrow & \left[ \left( f\right) _{\varepsilon }\right]
=\left( f\right) _{\varepsilon }+\mathcal{N}_{ap}%
\end{array}%
\end{equation*}

There is two ways to embed $f\in \mathcal{B}_{ap}$ into $\mathcal{G}_{ap}$.
Actually we have the same result.

\begin{proposition}
The following diagram%
\begin{equation*}
\begin{array}{lll}
\mathcal{B}_{ap} & \longrightarrow & \mathcal{B}_{ap}^{\prime } \\
& \sigma _{ap}\searrow & \text{ }\downarrow i_{ap} \\
&  & \mathcal{G}_{ap}%
\end{array}%
\end{equation*}%
is commutative.
\end{proposition}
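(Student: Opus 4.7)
The plan is to unwind the diagram: for $f \in \mathcal{B}_{ap}$ (which, as a bounded distribution, sits in $\mathcal{B}_{ap}'$), we have $\sigma_{ap}(f) = (f)_{\varepsilon} + \mathcal{N}_{ap}$ while $i_{ap}(f) = (f\ast\rho_\varepsilon)_\varepsilon + \mathcal{N}_{ap}$. So commutativity amounts to showing that $(f\ast\rho_\varepsilon - f)_\varepsilon \in \mathcal{N}_{ap}$, and that is the claim I would prove directly. Before estimating I would verify this net actually lives in $(\mathcal{B}_{ap})^I$: since $\rho\in\mathcal{S}\subset L^1$, each $\rho_\varepsilon$ is in $L^1$, so Proposition \ref{prpr001}(iii) gives $f\ast\rho_\varepsilon \in \mathcal{B}_{ap}$, and $\mathcal{B}_{ap}$ is closed under subtraction.

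The core estimate combines Taylor's theorem with the moment-vanishing defining $\Sigma$. For each fixed $N\in\mathbb{Z}_+$, since $f\in\mathcal{B}_{ap}\subset \mathcal{C}^\infty$ has all derivatives bounded, I write
\begin{equation*}
f(x-\varepsilon y) = \sum_{j=0}^{N}\frac{(-\varepsilon y)^j}{j!}f^{(j)}(x) + r_N(x,\varepsilon y),\qquad |r_N(x,\varepsilon y)|\leq \frac{(\varepsilon |y|)^{N+1}}{(N+1)!}\|f^{(N+1)}\|_\infty.
\end{equation*}
After the change of variable $(f\ast\rho_\varepsilon)(x) = \int f(x-\varepsilon y)\rho(y)\,dy$, integrating the Taylor expansion against $\rho(y)$ and using $\int\rho = 1$ together with $\int y^j\rho(y)\,dy=0$ for $j\geq 1$, the polynomial part collapses to $f(x)$, leaving
\begin{equation*}
(f\ast\rho_\varepsilon)(x)-f(x) = \int r_N(x,\varepsilon y)\,\rho(y)\,dy,
\end{equation*}
whose supremum norm is bounded by $\varepsilon^{N+1}$ times a finite constant, finiteness of $\int |y|^{N+1}|\rho(y)|\,dy$ coming from $\rho\in\mathcal{S}$. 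Applying the same argument to $f^{(k)}$ in place of $f$, which is legitimate since $\mathcal{B}_{ap}$ is closed under differentiation, yields $\|(f\ast\rho_\varepsilon - f)^{(k)}\|_\infty = O(\varepsilon^{N+1})$, and summing over $k$ gives $|f\ast\rho_\varepsilon - f|_{k,\infty} = O(\varepsilon^{N+1})$. Since $N$ is arbitrary, the net lies in $\mathcal{N}_{ap}$, as required.

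No serious obstacle is expected; the argument is the classical mollification estimate for Colombeau-type embeddings, specialized to the $\mathcal{D}_{L^\infty}$ seminorms. The only mild point of care is making the estimate uniform in the order of differentiation, which I handle by running the Taylor expansion separately at each order $k$ on $f^{(k)}$ rather than trying to differentiate the remainder term in $\varepsilon$ and $y$.
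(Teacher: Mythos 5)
Your proposal is correct and follows essentially the same route as the paper: Taylor expansion of $f(x-\varepsilon y)$ combined with the vanishing moments of $\rho\in\Sigma$ to cancel the polynomial terms, leaving a remainder of order $\varepsilon^{N+1}$ uniformly in $x$, applied likewise to all derivatives of $f$ (the paper writes the remainder in Lagrange form with an intermediate point, but the estimate is the same). Your preliminary check that $f\ast\rho_\varepsilon\in\mathcal{B}_{ap}$ via Proposition \ref{prpr001}(iii) is a small point the paper leaves implicit, and is a welcome addition.
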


\begin{proof}
Let $f\in \mathcal{B}_{ap},$ we prove that $\left( f\ast \rho _{\varepsilon
}-f\right) _{\varepsilon }\in \mathcal{N}_{ap}.$ By Taylor's formula and the
fact that $\rho \in \Sigma ,$ we obtain
\begin{equation*}
\left\Vert f\ast \rho _{\varepsilon }-f\right\Vert _{L^{\infty }}\leq
\varepsilon ^{m}\underset{x\in \mathbb{R}}{\sup }\underset{\mathbb{R}}{\int }%
\left\vert \frac{\left( -y\right) ^{m}}{m!}f^{\left( m\right) }\left(
x-\theta \left( x\right) \varepsilon y\right) \rho \left( y\right)
dy\right\vert ,
\end{equation*}%
then $\exists C_{m}>0,$ such that%
\begin{equation*}
\left\Vert f\ast \rho _{\varepsilon }-f\right\Vert _{L^{\infty }}\leq
\varepsilon ^{m}C_{m}\left\Vert f^{\left( m\right) }\right\Vert _{L^{\infty
}}\left\Vert y^{m}\rho \right\Vert _{L^{1}}.
\end{equation*}%
The same result can be obtained for all the derivatives of $f.$ Hence $%
\left( f\ast \rho _{\varepsilon }-f\right) _{\varepsilon }\in \mathcal{N}%
_{ap}.$
\end{proof}

The Colombeau algebra of tempered generalized functions on $\mathbb{C}$ is
denoted $\mathcal{G}_{\mathcal{T}}\left( \mathbb{C}\right) ,$ for more
details on $\mathcal{G}_{\mathcal{T}}\left( \mathbb{C}\right) $ see \cite%
{002} or \cite{003}$.$

\begin{proposition}
Let $u\in \mathcal{G}_{ap}$ and $F\in \mathcal{G}_{\mathcal{T}}\left(
\mathbb{C}\right) $, then
\begin{equation*}
F\circ u=\left[ \left( F\circ u_{\varepsilon }\right) _{\varepsilon }\right]
\end{equation*}

is a well defined element of $\mathcal{G}_{ap}.$
\end{proposition}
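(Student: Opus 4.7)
The plan is to verify two things: (a) that for any representatives $(F_\varepsilon)_\varepsilon$ of $F$ and $(u_\varepsilon)_\varepsilon$ of $u$, the family $(F_\varepsilon\circ u_\varepsilon)_\varepsilon$ lies in $\mathcal{M}_{ap}$; and (b) that the resulting class modulo $\mathcal{N}_{ap}$ does not depend on the chosen representatives.

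For (a), fix $\varepsilon\in I$ and first show $F_\varepsilon\circ u_\varepsilon\in\mathcal{B}_{ap}$. Since $u_\varepsilon\in\mathcal{B}_{ap}\subset\mathcal{D}_{L^{\infty}}$ is bounded, its range lies in a compact set $K_\varepsilon\subset\mathbb{C}$ on which $F_\varepsilon$ is uniformly continuous; by Bochner's criterion, if $u_\varepsilon(\cdot+h_{n_k})$ converges uniformly then so does $F_\varepsilon(u_\varepsilon(\cdot+h_{n_k}))$, whence $F_\varepsilon\circ u_\varepsilon\in\mathcal{C}_{ap}$. Iterating the chain rule (Fa\`a di Bruno), each derivative $(F_\varepsilon\circ u_\varepsilon)^{(k)}$ is a polynomial in the $u_\varepsilon^{(l)}$ with coefficients $F_\varepsilon^{(p)}(u_\varepsilon)$, and the same Bochner argument shows that each such coefficient lies in $\mathcal{C}_{ap}$ and is bounded. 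Using that $\mathcal{B}_{ap}$ is a differential subalgebra of $\mathcal{D}_{L^{\infty}}$ together with Proposition \ref{prpr001}(iv), every derivative lies in $\mathcal{B}_{ap}$, so $F_\varepsilon\circ u_\varepsilon\in\mathcal{B}_{ap}$.

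For the moderateness bounds, I would exploit that $F\in\mathcal{G}_{\mathcal{T}}(\mathbb{C})$ provides, for each $p\in\mathbb{Z}_{+}$, an exponent $N_p$ with $\sup_{z\in\mathbb{C}}(1+|z|)^{-N_p}|F_\varepsilon^{(p)}(z)|=O(\varepsilon^{-N_p})$ as $\varepsilon\to 0$. Combined with $\|u_\varepsilon\|_{L^{\infty}}=O(\varepsilon^{-m_0})$ this gives $\|F_\varepsilon^{(p)}(u_\varepsilon)\|_{L^{\infty}}=O(\varepsilon^{-N_p(1+m_0)})$, and Fa\`a di Bruno applied to $(F_\varepsilon\circ u_\varepsilon)^{(k)}$ expresses $|F_\varepsilon\circ u_\varepsilon|_{k,\infty}$ as a finite sum of products of such moderate quantities with seminorms $|u_\varepsilon|_{l,\infty}$, $l\le k$, giving an overall bound of the form $O(\varepsilon^{-M_k})$.

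For (b), suppose $(u_\varepsilon-\tilde u_\varepsilon)_\varepsilon\in\mathcal{N}_{ap}$ and $(F_\varepsilon-\tilde F_\varepsilon)_\varepsilon$ is tempered-negligible, and split
\begin{equation*}
F_\varepsilon\circ u_\varepsilon-\tilde F_\varepsilon\circ\tilde u_\varepsilon=(F_\varepsilon-\tilde F_\varepsilon)\circ u_\varepsilon+(u_\varepsilon-\tilde u_\varepsilon)\int_{0}^{1}\tilde F_\varepsilon'\bigl(\tilde u_\varepsilon+t(u_\varepsilon-\tilde u_\varepsilon)\bigr)\,dt.
\end{equation*}
The first summand lies in $\mathcal{N}_{ap}$ because the tempered-negligibility of $F_\varepsilon-\tilde F_\varepsilon$, restricted to the bounded image of $u_\varepsilon$, yields $L^{\infty}$ decay faster than every $\varepsilon^m$; the second summand is the product of a moderate family (by step (a)) and a family in $\mathcal{N}_{ap}$, hence lies in $\mathcal{N}_{ap}$ by Proposition \ref{pimah1}(ii). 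The main technical obstacle is treating the higher-order seminorms $|\cdot|_{k,\infty}$ of this difference: one must apply Fa\`a di Bruno to each summand and then regroup the resulting many terms so that each contains at least one factor drawn from $\mathcal{N}_{ap}$ or from the tempered-negligible difference $F_\varepsilon^{(p)}-\tilde F_\varepsilon^{(p)}$. This combinatorial regrouping is the longest routine step of the argument.
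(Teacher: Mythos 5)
Your proposal is correct and is essentially the detailed version of the paper's own (one-line) argument, which simply invokes the classical Colombeau composition estimates for tempered $F$ together with the classical fact that a uniformly continuous function of an almost periodic function is almost periodic, i.e.\ exactly your Bochner plus Fa\`a di Bruno plus mean-value splitting scheme. The only cosmetic caveats are that $F_{\varepsilon}$ is a smooth function on $\mathbb{C}\cong\mathbb{R}^{2}$, so the derivatives in your chain-rule and mean-value steps should be read as partial (or Wirtinger) derivatives, and in the splitting only the $L^{\infty}$ moderateness of the integral factor is needed, not its membership in $\mathcal{M}_{ap}$.
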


\begin{proof}
It follows from the classical case of composition, in context of Colombeau
algebra, we have $F\circ u_{\varepsilon }\in \mathcal{B}_{ap}$\ in view of
the classical results of composition and convolution.
\end{proof}

We recall a characterization of integrable distributions.

\begin{definition}
A distribution $v\in \mathcal{D}^{\prime }$ is said an integrable
distribution, denoted $v\in \mathcal{D}_{L^{1}}^{\prime },$ if and only if $%
v=\underset{i\leq l}{\sum }f_{i}^{\left( i\right) },$ where $f_{i}\in L^{1}.$
\end{definition}

\begin{proposition}
If $u=\left[ \left( u_{\varepsilon }\right) _{\varepsilon }\right] \in
\mathcal{G}_{ap}$ and $v\in \mathcal{D}_{L^{1}}^{\prime },$ then the
convolution $u\ast v$ defined by%
\begin{equation*}
\left( u\ast v\right) \left( x\right) =\left( \underset{\mathbb{R}}{\int }%
u_{\varepsilon }\left( x-y\right) v\left( y\right) dy\right) _{\varepsilon }+%
\mathcal{N}\left[ \mathbb{C}\right]
\end{equation*}%
is a well defined almost periodic generalized function$.$
\end{proposition}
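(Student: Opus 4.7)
The plan is to decompose $v$ via the structural characterization of $\mathcal{D}_{L^{1}}^{\prime}$ just recalled and reduce everything to a finite sum of convolutions of $\mathcal{B}_{ap}$-derivatives against $L^{1}$-functions, then check the three standard things: classical meaning of the convolution, membership in $\mathcal{M}_{ap}$, and independence of the representative.

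First I would write $v=\sum_{i\leq l}f_{i}^{(i)}$ with $f_{i}\in L^{1}$, so that formally, for each fixed $\varepsilon\in I$,
\begin{equation*}
u_{\varepsilon}\ast v=\sum_{i\leq l}u_{\varepsilon}\ast f_{i}^{(i)}=\sum_{i\leq l}u_{\varepsilon}^{(i)}\ast f_{i}.
\end{equation*}
Since $u_{\varepsilon}^{(i)}\in\mathcal{B}_{ap}\subset L^{\infty}$ and $f_{i}\in L^{1}$, each term on the right is a classically defined bounded continuous function; moreover, by part $(iii)$ of Proposition~\ref{prpr001}, $u_{\varepsilon}^{(i)}\ast f_{i}\in\mathcal{B}_{ap}$, so the sum $u_{\varepsilon}\ast v$ lies in $\mathcal{B}_{ap}$ for every $\varepsilon$. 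This takes care of the pointwise definition and of the ``almost periodicity per $\varepsilon$'' part.

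Next I would verify the moderate estimate. For any $k\in\mathbb{Z}_{+}$ and $j\leq k$, differentiation under the convolution gives $(u_{\varepsilon}^{(i)}\ast f_{i})^{(j)}=u_{\varepsilon}^{(i+j)}\ast f_{i}$, whence by Young's inequality
\begin{equation*}
\bigl\Vert (u_{\varepsilon}^{(i)}\ast f_{i})^{(j)}\bigr\Vert_{L^{\infty}}\leq \bigl\Vert u_{\varepsilon}^{(i+j)}\bigr\Vert_{L^{\infty}}\Vert f_{i}\Vert_{L^{1}}.
\end{equation*}
Summing over $i\leq l$ and $j\leq k$ yields $|u_{\varepsilon}\ast v|_{k,\infty}\leq C_{v}\,|u_{\varepsilon}|_{k+l,\infty}$ with $C_{v}=\sum_{i\leq l}\Vert f_{i}\Vert_{L^{1}}$. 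Since $(u_{\varepsilon})_{\varepsilon}\in\mathcal{M}_{ap}$, there exists $m\in\mathbb{Z}_{+}$ with $|u_{\varepsilon}|_{k+l,\infty}=O(\varepsilon^{-m})$, and therefore $(u_{\varepsilon}\ast v)_{\varepsilon}\in\mathcal{M}_{ap}$.

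Finally, independence of the representative follows from exactly the same estimate: if $(u_{\varepsilon})_{\varepsilon}-(\tilde u_{\varepsilon})_{\varepsilon}=(n_{\varepsilon})_{\varepsilon}\in\mathcal{N}_{ap}$, then $|n_{\varepsilon}\ast v|_{k,\infty}\leq C_{v}\,|n_{\varepsilon}|_{k+l,\infty}=O(\varepsilon^{m})$ for every $m$, so $(n_{\varepsilon}\ast v)_{\varepsilon}\in\mathcal{N}_{ap}$. The main (mild) obstacle is making the shift from the distributional convolution $u_{\varepsilon}\ast v$ to the integral representation $\int u_{\varepsilon}(x-y)v(y)\,dy$ in the statement rigorous: this is handled by interpreting the integral as the action of $v\in\mathcal{D}_{L^{1}}^{\prime}$ on the $\mathcal{D}_{L^{\infty}}$-function $y\mapsto u_{\varepsilon}(x-y)$, which is legitimate because $\mathcal{B}_{ap}\subset\mathcal{D}_{L^{\infty}}$ and $\mathcal{D}_{L^{1}}^{\prime}$ is by construction the topological dual of $\mathcal{D}_{L^{1}}$ and acts on $\mathcal{D}_{L^{\infty}}$ via the same decomposition $v=\sum f_{i}^{(i)}$.
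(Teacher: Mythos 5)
Your proposal is correct and follows essentially the same route as the paper's proof: decompose $v=\sum_{i\leq l}f_{i}^{(i)}$ with $f_{i}\in L^{1}$, use $\mathcal{B}_{ap}\ast L^{1}\subset\mathcal{B}_{ap}$ for almost periodicity of $u_{\varepsilon}\ast v$ at each fixed $\varepsilon$, Young's inequality for the moderate bounds, and the same estimate applied to the difference of two representatives for well-definedness. Your added remark on interpreting the integral as the action of $v$ on $y\mapsto u_{\varepsilon}(x-y)$ is a welcome clarification that the paper leaves implicit.
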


\begin{proof}
Let $\left( u_{\varepsilon }\right) _{\varepsilon }\in \mathcal{M}_{ap}$ be
a representative of $u,$ then%
\begin{equation*}
\forall k\in \mathbb{Z}_{+},\exists m\in \mathbb{Z}_{+},\exists C>0,\exists
\varepsilon _{0}\in I,\forall \varepsilon \leq \varepsilon _{0},\left\vert
u_{\varepsilon }\right\vert _{k,\infty }<C\varepsilon ^{-m},
\end{equation*}%
since $v\in \mathcal{D}_{L^{1}}^{\prime }$ then $v=\underset{i\leq l}{\sum }%
f_{i}^{\left( i\right) },$ where $f_{i}\in L^{1}.$ For each $\varepsilon \in
I,$ $u_{\varepsilon }\ast v$ is an almost periodic infinitely differentiable
function. By Young inequality there exists $C>0$ such that%
\begin{equation*}
\left\Vert \left( u_{\varepsilon }\ast v\right) ^{\left( j\right)
}\right\Vert _{L^{\infty }}\leq C\underset{i\leq l}{\sum }\left\Vert
f_{i}\right\Vert _{L^{1}}\left\Vert u_{\varepsilon }^{\left( i+j\right)
}\right\Vert _{L^{\infty }},
\end{equation*}%
consequently%
\begin{equation*}
\left\vert u_{\varepsilon }\ast v\right\vert _{k,\infty }=O\left(
\varepsilon ^{-m}\right) ,\varepsilon \longrightarrow 0\text{,}
\end{equation*}%
this shows that $\left( u_{\varepsilon }\ast v\right) _{\varepsilon }\in
\mathcal{M}_{ap}$. Suppose that $\left( w_{\varepsilon }\right)
_{\varepsilon }\in \mathcal{M}_{ap}$ is another representative of $u,$ then
there exists $C>0$ such that%
\begin{eqnarray*}
\left\Vert \left( u_{\varepsilon }\ast v-w_{\varepsilon }\ast v\right)
\right\Vert _{L^{\infty }} &\leq &\underset{i\leq l}{\sum }\underset{\mathbb{%
R}}{\sup }\underset{\mathbb{R}}{\int }\left\vert \left( u_{\varepsilon
}-w_{\varepsilon }\right) ^{\left( i\right) }\left( x-y\right) \right\vert
\left\vert f_{i}\left( y\right) \right\vert dy \\
&\leq &C\underset{i\leq l}{\sum }\left\Vert f_{i}\right\Vert
_{L^{1}}\left\Vert \left( u_{\varepsilon }-w_{\varepsilon }\right) ^{\left(
i\right) }\right\Vert _{L^{\infty }},
\end{eqnarray*}%
as $\left( u_{\varepsilon }-w_{\varepsilon }\right) _{\varepsilon }\in
\mathcal{N}_{ap},$ so $\forall m\in \mathbb{Z}_{+},$%
\begin{equation*}
\left\vert \left( u_{\varepsilon }\ast v-w_{\varepsilon }\ast v\right)
\left( x\right) \right\vert =O\left( \varepsilon ^{m}\right) ,\varepsilon
\longrightarrow 0\text{.}
\end{equation*}%
We obtain the same result for $\left( u_{\varepsilon }\ast v-w_{\varepsilon
}\ast v\right) _{\varepsilon }^{\left( j\right) }.$ Hence $\left(
u_{\varepsilon }\ast v-w_{\varepsilon }\ast v\right) _{\varepsilon }\in
\mathcal{N}_{ap}.$
\end{proof}

If $u=\left[ \left( u_{\varepsilon }\right) _{\varepsilon }\right] \in
\mathcal{G}_{ap}$, taking the integral of each element $u_{\varepsilon }$ on
a compact, we obtain an element of $\mathbb{C}^{I}.$

\begin{definition}
Let $u=\left[ \left( u_{\varepsilon }\right) _{\varepsilon }\right] \in
\mathcal{G}_{ap}$ and $x_{0}\in \mathbb{R}$, define the primitive of $u$ by%
\begin{equation*}
U\left( x\right) =\left( \underset{x_{0}}{\overset{x}{\int }}u_{\varepsilon
}\left( t\right) dt\right) _{\varepsilon }+\mathcal{N}\left[ \mathbb{C}%
\right] \text{.}
\end{equation*}
\end{definition}

We give a generalized version of the classical Bohl-Bohr theorem.

\begin{proposition}
The primitive of an almost periodic generalized function is almost periodic
if and only if it is bounded generalized function.
\end{proposition}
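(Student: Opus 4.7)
The plan is to reduce everything to the classical Bohl--Bohr theorem applied representative by representative, together with the characterization $\mathcal{B}_{ap}=\mathcal{D}_{L^{\infty }}\cap \mathcal{C}_{ap}$ from Proposition \ref{prpr001}(iv). Write $U_{\varepsilon }(x)=\int_{x_{0}}^{x}u_{\varepsilon }(t)\,dt$ so that $U_{\varepsilon }^{\prime }=u_{\varepsilon }$ and, for $j\geq 1$, $U_{\varepsilon }^{(j+1)}=u_{\varepsilon }^{(j)}$.

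The direction ($\Longrightarrow $) is essentially a bookkeeping check. If $U\in \mathcal{G}_{ap}$, a representative $(U_{\varepsilon })_{\varepsilon }$ lies in $\mathcal{M}_{ap}\subset (\mathcal{B}_{ap})^{I}\subset (\mathcal{D}_{L^{\infty }})^{I}$ with the moderate growth of the norms $|U_{\varepsilon }|_{k,\infty }$ required in $\mathcal{M}_{L^{\infty }}$. Hence $U\in \mathcal{G}_{L^{\infty }}$, i.e.\ $U$ is a bounded generalized function.

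For the nontrivial direction ($\Longleftarrow $), I would argue $\varepsilon $ by $\varepsilon $. Fix a representative $(u_{\varepsilon })_{\varepsilon }\in \mathcal{M}_{ap}$ of $u$. By assumption the primitive $U$ admits a bounded representative, so in particular for each $\varepsilon \in I$ the function $U_{\varepsilon }$ is a smooth bounded function, and $U_{\varepsilon }^{\prime }=u_{\varepsilon }\in \mathcal{B}_{ap}\subset \mathcal{C}_{ap}$. The classical Bohl--Bohr theorem then yields $U_{\varepsilon }\in \mathcal{C}_{ap}$. Because all higher derivatives of $U_{\varepsilon }$ are derivatives of $u_{\varepsilon }$, which are bounded, $U_{\varepsilon }$ belongs to $\mathcal{D}_{L^{\infty }}$, and Proposition \ref{prpr001}(iv) upgrades this to $U_{\varepsilon }\in \mathcal{B}_{ap}$. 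So $(U_{\varepsilon })_{\varepsilon }\in (\mathcal{B}_{ap})^{I}$.

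It then remains to establish the moderate estimates defining $\mathcal{M}_{ap}$. For $k\geq 1$ one simply writes
\begin{equation*}
|U_{\varepsilon }|_{k,\infty }\leq \Vert U_{\varepsilon }\Vert _{L^{\infty }}+|u_{\varepsilon }|_{k-1,\infty },
\end{equation*}
and combines the moderate bound on $\Vert U_{\varepsilon }\Vert _{L^{\infty }}$ coming from $U\in \mathcal{G}_{L^{\infty }}$ with the moderate bound on $|u_{\varepsilon }|_{k-1,\infty }$ coming from $u\in \mathcal{M}_{ap}$; the maximum of the two exponents works. The case $k=0$ is contained in the assumption. Thus $(U_{\varepsilon })_{\varepsilon }\in \mathcal{M}_{ap}$ and $U\in \mathcal{G}_{ap}$. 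The main obstacle I anticipate is bookkeeping of representatives: passing from $(u_{\varepsilon })$ to $(u_{\varepsilon }+n_{\varepsilon })$ with $(n_{\varepsilon })\in \mathcal{N}_{ap}$ alters $U_{\varepsilon }$ by an antiderivative of a negligible net, and a priori $\int_{x_{0}}^{x}n_{\varepsilon }(t)\,dt$ grows linearly in $x$; however one only needs the conclusion for \emph{some} bounded representative, which is exactly what the hypothesis on $U$ provides, so the argument above goes through without ambiguity.
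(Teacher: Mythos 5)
Your argument is correct and follows essentially the same route as the paper: the forward direction from $\mathcal{G}_{ap}\subset\mathcal{G}_{L^{\infty}}$, and the converse by applying the classical Bohl--Bohr theorem $\varepsilon$ by $\varepsilon$ (upgraded to $\mathcal{B}_{ap}$ via $\mathcal{B}_{ap}=\mathcal{D}_{L^{\infty}}\cap\mathcal{C}_{ap}$), then obtaining the moderate bounds on $|U_{\varepsilon}|_{k,\infty}$ from the boundedness hypothesis for the zeroth-order term and from $(u_{\varepsilon})_{\varepsilon}\in\mathcal{M}_{ap}$ for the derivatives. Your closing remark on the representative dependence of the primitive is a point the paper leaves implicit, and your resolution is consistent with how the statement is meant.
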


\begin{proof}
$\left( \Longrightarrow \right) :$ It follows from the fact that $\mathcal{G}%
_{ap}\subset \mathcal{G}_{L^{\infty }}.$ $\left( \Longleftarrow \right) :$
Let $u=\left[ \left( u_{\varepsilon }\right) _{\varepsilon }\right] \in
\mathcal{G}_{ap}$ and let $U=\left[ \left( U_{\varepsilon }\right)
_{\varepsilon }\right] $ be its primitive$,$ since for each $\varepsilon \in
I,$ $u_{\varepsilon }\in \mathcal{B}_{ap}$ and $U_{\varepsilon }=\underset{%
x_{0}}{\overset{x}{\int }}u_{\varepsilon }\left( t\right) dt\in \mathcal{D}%
_{L^{\infty }}$ then by the classical result of Bohl-Bohr and $\mathcal{B}%
_{ap},$ for every $\varepsilon \in I$ we have $\underset{x_{0}}{\overset{x}{%
\int }}u_{\varepsilon }\left( t\right) dt\in \mathcal{B}_{ap}.$ We shows
that $\left( U_{\varepsilon }\right) _{\varepsilon }\in \mathcal{M}_{ap},$
i.e.
\begin{equation*}
\forall k\in \mathbb{Z}_{+},\exists m\in \mathbb{Z}_{+},\left\vert
U_{\varepsilon }\right\vert _{k,\infty }=\underset{j\leq k}{\sum }\underset{%
x\in \mathbb{R}}{\sup }\left\vert U_{\varepsilon }^{\left( j\right) }\left(
x\right) \right\vert =O\left( \varepsilon ^{-m}\right) ,\varepsilon
\longrightarrow 0.
\end{equation*}%
If $j=0,$ since $\left( U_{\varepsilon }\right) _{\varepsilon }\in \mathcal{M%
}_{L^{\infty }}.$ By hypothesis, we have%
\begin{equation*}
\underset{x\in \mathbb{R}}{\sup }\left\vert U_{\varepsilon }\left( x\right)
\right\vert =O\left( \varepsilon ^{-m}\right) ,\varepsilon \longrightarrow 0,
\end{equation*}%
which shows that $\left( U_{\varepsilon }\right) _{\varepsilon }\in \mathcal{%
M}_{ap}.$ If $j\geq 1,$ we have $\left\vert U_{\varepsilon }^{\left(
j\right) }\left( x\right) \right\vert =\left\vert u_{\varepsilon }^{\left(
j-1\right) }\left( x\right) \right\vert ,$ which gives%
\begin{equation*}
\underset{j\leq k}{\sum }\underset{x\in \mathbb{R}}{\sup }\left\vert
U_{\varepsilon }^{\left( j\right) }\left( x\right) \right\vert \leq \underset%
{j\leq k}{\sum }\underset{x\in \mathbb{R}}{\sup }\left\vert u_{\varepsilon
}^{\left( j-1\right) }\left( x\right) \right\vert ,
\end{equation*}%
consequently%
\begin{equation*}
\left\vert U_{\varepsilon }\right\vert _{k,\infty }=O\left( \varepsilon
^{-m}\right) ,\varepsilon \longrightarrow 0,
\end{equation*}%
i.e. $\left( U_{\varepsilon }\right) _{\varepsilon }\in \mathcal{M}_{ap}.$
\end{proof}

As in the classical theory, we introduce the notion of mean value within the
algebra $\mathcal{G}_{ap}.$

\begin{definition}
Let $u\in \mathcal{G}_{ap}$, the generalized mean value of $u,$ denoted by $%
M_{g}\left( u\right) ,$ is defined by%
\begin{equation*}
M_{g}\left( u\right) =\left( \underset{X\longrightarrow +\infty }{\lim }%
\frac{1}{X}\underset{0}{\overset{X}{\int }}u_{\varepsilon }\left( x\right)
dx\right) _{\varepsilon }+\mathcal{N}\left[ \mathbb{C}\right] ,
\end{equation*}%
where $\left( u_{\varepsilon }\right) _{\varepsilon }$\ is a representative
of $u.$
\end{definition}

The definition of $M_{g}\left( u\right) $\ is correct and does not depend on
representatives.

\begin{proposition}
$i)$ If $u=\left[ \left( u_{\varepsilon }\right) _{\varepsilon }\right] \in
\mathcal{G}_{ap}$, then $M_{g}\left( u\right) \in \widetilde{\mathbb{C}}.$

$ii)$ If $\left( u_{\varepsilon }\right) _{\varepsilon }\in \mathcal{N}_{ap}$%
, then $M_{g}\left( u\right) =0$ in $\widetilde{\mathbb{C}}.$
\end{proposition}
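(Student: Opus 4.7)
The plan is to reduce both statements to the elementary bound
\begin{equation*}
\Bigl\lvert\frac{1}{X}\int_{0}^{X}f(x)\,dx\Bigr\rvert\le\lVert f\rVert_{L^{\infty}},
\end{equation*}
valid for any bounded measurable $f$ and any $X>0$, applied pointwise in $\varepsilon$.

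First I would record that for each fixed $\varepsilon\in I$ the number $M(u_{\varepsilon}):=\lim_{X\to+\infty}\frac{1}{X}\int_{0}^{X}u_{\varepsilon}(x)\,dx$ genuinely exists. Since $(u_{\varepsilon})_{\varepsilon}\in\mathcal{M}_{ap}$ we have $u_{\varepsilon}\in\mathcal{B}_{ap}\subset\mathcal{C}_{ap}$ for every $\varepsilon$, so the classical Bohr theorem on the mean value of a uniformly almost periodic function delivers the limit. Hence $\varepsilon\mapsto M(u_{\varepsilon})$ is a well-defined element of $\mathbb{C}^{I}$ and it only remains to track its $\varepsilon$-asymptotics.

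For $(i)$, passing to the limit in the displayed bound gives $\lvert M(u_{\varepsilon})\rvert\le\lVert u_{\varepsilon}\rVert_{L^{\infty}}=\lvert u_{\varepsilon}\rvert_{0,\infty}$. Taking $k=0$ in the moderateness condition for $(u_{\varepsilon})_{\varepsilon}\in\mathcal{M}_{ap}$ produces $m\in\mathbb{Z}_{+}$ with $\lvert u_{\varepsilon}\rvert_{0,\infty}=O(\varepsilon^{-m})$, so $\lvert M(u_{\varepsilon})\rvert=O(\varepsilon^{-m})$. This is exactly the moderateness condition defining $\widetilde{\mathbb{C}}$, so $(M(u_{\varepsilon}))_{\varepsilon}$ represents an element of $\widetilde{\mathbb{C}}$.

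For $(ii)$, the same inequality $\lvert M(u_{\varepsilon})\rvert\le\lvert u_{\varepsilon}\rvert_{0,\infty}$ combined with $(u_{\varepsilon})_{\varepsilon}\in\mathcal{N}_{ap}$ yields $\lvert M(u_{\varepsilon})\rvert=O(\varepsilon^{m})$ for every $m\in\mathbb{Z}_{+}$, i.e.\ the net $(M(u_{\varepsilon}))_{\varepsilon}$ is negligible and represents $0$ in $\widetilde{\mathbb{C}}$. As a bonus, $(ii)$ then supplies the representative-independence of $M_{g}$ asserted just before the statement: if $(u_{\varepsilon})$ and $(u'_{\varepsilon})$ represent the same $u\in\mathcal{G}_{ap}$, then $(u_{\varepsilon}-u'_{\varepsilon})\in\mathcal{N}_{ap}$ and $(ii)$ gives $M_{g}(u)=M_{g}(u')$ in $\widetilde{\mathbb{C}}$. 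I do not foresee a genuine obstacle here: everything rests on the pointwise sup-bound of the time average together with the classical existence of the mean on $\mathcal{C}_{ap}$, a standard fact already implicit in the preceding propositions.
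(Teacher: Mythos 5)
Your proposal is correct and follows essentially the same route as the paper: both rest on the bound $\bigl\vert\lim_{X\to+\infty}\frac{1}{X}\int_{0}^{X}u_{\varepsilon}(x)\,dx\bigr\vert\le\sup_{x\in\mathbb{R}}\vert u_{\varepsilon}(x)\vert$ together with the moderateness (resp.\ negligibility) of $(u_{\varepsilon})_{\varepsilon}$ at order $k=0$. Your explicit appeal to Bohr's theorem for the existence of the mean of each $u_{\varepsilon}\in\mathcal{C}_{ap}$, and the remark that $(ii)$ yields independence of the representative, are welcome additions the paper leaves implicit.
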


\begin{proof}
$i)$ Let $\varepsilon \in I,$ we have%
\begin{equation*}
\left\vert \underset{X\rightarrow +\infty }{\lim }\frac{1}{X}\underset{0}{%
\overset{X}{\int }}u_{\varepsilon }\left( x\right) dx\right\vert \leq
\underset{x\in \mathbb{R}}{\sup }\left\vert u_{\varepsilon }\left( x\right)
\right\vert ,
\end{equation*}%
as $\left( u_{\varepsilon }\right) _{\varepsilon }\in \mathcal{M}_{ap},$ so $%
\exists m\in \mathbb{Z}_{+},\underset{x\in \mathbb{R}}{\sup }\left\vert
u_{\varepsilon }\left( x\right) \right\vert =O\left( \varepsilon
^{-m}\right) ,\varepsilon \longrightarrow 0,$ hence%
\begin{equation*}
\left( \underset{X\rightarrow +\infty }{\lim }\frac{1}{X}\underset{0}{%
\overset{X}{\int }}u_{\varepsilon }\left( x\right) dx\right) _{\varepsilon
}\in \mathcal{E}_{M}\left[ \mathbb{C}\right] .
\end{equation*}

$ii)$ If $\left( u_{\varepsilon }\right) _{\varepsilon }\in \mathcal{N}%
_{ap}, $ i.e.%
\begin{equation*}
\left\vert \underset{X\rightarrow +\infty }{\lim }\frac{1}{X}\underset{0}{%
\overset{X}{\int }}u_{\varepsilon }\left( x\right) dx\right\vert =O\left(
\varepsilon ^{m}\right) ,\varepsilon \longrightarrow 0,\forall m\in \mathbb{Z%
}_{+},
\end{equation*}%
then $M_{g}\left( u\right) =0$ in $\widetilde{\mathbb{C}}.$
\end{proof}

We have compatibility of the generalized mean value with that of a
distribution as stated in the following.

\begin{proposition}
If $T\in \mathcal{B}_{ap}^{\prime }$, then $M_{g}\left( i_{ap}\left(
T\right) \right) =M\left( T\right) $ in $\mathbb{C}.$
\end{proposition}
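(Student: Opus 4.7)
The plan is to reduce the generalized mean to classical means of almost periodic continuous functions by using the structure theorem for $\mathcal{B}_{ap}^{\prime}$. For each fixed $\varepsilon\in I$ the regularization $T\ast\rho_{\varepsilon}$ lies in $\mathcal{B}_{ap}$, so it has a well-defined ordinary mean in $\mathbb{C}$, and my strategy is to show that this ordinary mean equals $M(T)$ for every $\varepsilon$. Then the representative $\bigl(M(T\ast\rho_{\varepsilon})\bigr)_{\varepsilon}$ of $M_{g}(i_{ap}(T))$ is the constant sequence $M(T)$, whose class in $\widetilde{\mathbb{C}}$ is precisely the canonical image of $M(T)\in\mathbb{C}$.

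The key analytic ingredient is the classical identity
\begin{equation*}
M(f\ast g)=M(f)\int_{\mathbb{R}}g(y)\,dy,\qquad f\in\mathcal{C}_{ap},\ g\in L^{1}(\mathbb{R}),
\end{equation*}
which I would derive from Fubini and dominated convergence, noting that the truncated means $\tfrac{1}{X}\int_{-y}^{X-y}f(u)\,du$ are bounded by $\Vert f\Vert_{\infty}$ and tend to $M(f)$ as $X\to+\infty$. Combining this with Theorem~\ref{Theor01}(iii), which provides a decomposition $T=\sum_{j\leq m}f_{j}^{(j)}$ with $f_{j}\in\mathcal{C}_{ap}$, and with the identity $(f_{j}^{(j)}\ast\rho_{\varepsilon})=(f_{j}\ast\rho_{\varepsilon}^{(j)})$, one obtains
\begin{equation*}
M(T\ast\rho_{\varepsilon})=\sum_{j\leq m}M(f_{j})\int_{\mathbb{R}}\rho_{\varepsilon}^{(j)}(y)\,dy.
\end{equation*}
A change of variables gives $\int\rho_{\varepsilon}=\int\rho=1$, while for $j\geq 1$ the fundamental theorem of calculus applied to the rapidly decaying $\rho^{(j-1)}$ yields $\int\rho_{\varepsilon}^{(j)}=0$. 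Therefore $M(T\ast\rho_{\varepsilon})=M(f_{0})$, independently of $\varepsilon$. Observe, incidentally, that the higher moment conditions of $\Sigma$ are not required here, only $\int\rho=1$ and $\rho\in\mathcal{S}$.

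The last step invokes the classical compatibility $M(T)=M(f_{0})$ of Schwartz's mean value on $\mathcal{B}_{ap}^{\prime}$ with the representation of $T$ as a sum of derivatives of almost periodic functions. This matches the constant sequence above with the canonical image of $M(T)\in\mathbb{C}$ inside $\widetilde{\mathbb{C}}$, completing the proof. The only genuinely non-routine point I anticipate is showing that $M(f_{0})$ does not depend on the chosen decomposition of $T$; this is a standard result of Schwartz \cite{006}, and can also be derived directly by observing that any two decompositions differ by a relation $\sum_{j\leq m}g_{j}^{(j)}=0$, from which one deduces that $g_{0}$ is itself a sum of derivatives of $\mathcal{C}_{ap}$-functions and hence has vanishing classical mean.
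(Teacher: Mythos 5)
Your argument is correct, but it takes a more self-contained route than the paper. The paper fixes $\varphi \in \mathcal{D}$ with $\int \varphi =1$, writes $M_{g}\left( i_{ap}\left( T\right) \right) -M\left( T\right) =M\left( T\ast \left( \rho _{\varepsilon }-\varphi \right) \right)$ and then invokes Schwartz's formula (VI.9.2), namely $M\left( T\ast \alpha \right) =M\left( T\right) \int \alpha$, to conclude that this difference vanishes. You instead prove the relevant identity from scratch at the level of continuous almost periodic functions, $M\left( f\ast g\right) =M\left( f\right) \int g$ for $f\in \mathcal{C}_{ap}$, $g\in L^{1}$, via Fubini and dominated convergence, push it through the structure theorem $T=\sum_{j\leq m}f_{j}^{\left( j\right) }$, and use $\int \rho _{\varepsilon }^{\left( j\right) }=0$ for $j\geq 1$ to get $M\left( T\ast \rho _{\varepsilon }\right) =M\left( f_{0}\right)$ for every $\varepsilon$; your remark that the moment conditions defining $\Sigma$ are not needed here (only $\int \rho =1$) is a genuine observation the paper does not make. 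What the paper's citation buys is exactly the point you flag as the only non-routine one: the identification $M\left( T\right) =M\left( f_{0}\right)$, i.e.\ independence of the mean from the chosen decomposition, which is essentially Schwartz's formula in disguise; your closing sketch of it (``$g_{0}$ is a sum of derivatives, hence has vanishing mean'') deserves one more line, e.g.\ convolve the relation $\sum_{j\leq m}g_{j}^{\left( j\right) }=0$ with some $\varphi \in \mathcal{D}$, $\int \varphi =1$, and apply your own identity to obtain $M\left( g_{0}\right) =-\sum_{j\geq 1}M\left( g_{j}\right) \int \varphi ^{\left( j\right) }=0$; with that (or the citation of \cite{006}) the argument is complete. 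In short, the paper's proof is shorter because it leans on Schwartz, while yours is longer but elementary and makes explicit which properties of $\rho$ are actually used.
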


\begin{proof}
We have
\begin{equation*}
M_{g}\left( i_{ap}\left( T\right) \right) =\underset{X\longrightarrow
+\infty }{\lim }\frac{1}{X}\underset{0}{\overset{X}{\int }}i_{ap}\left(
T\right) \left( h\right) dh=\left( \underset{X\longrightarrow +\infty }{\lim
}\frac{1}{X}\underset{0}{\overset{X}{\int }}\left( T\ast \rho _{\varepsilon
}\right) \left( h\right) dh\right) _{\varepsilon },
\end{equation*}%
where $\rho \in \Sigma $ and $\rho _{\varepsilon }\left( .\right) =\frac{1}{%
\varepsilon }\rho \left( \frac{.}{\varepsilon }\right) .$ Let $\varphi \in
\mathcal{D}$ and $\underset{\mathbb{R}}{\int }\varphi \left( x\right) dx=1,$
then%
\begin{equation*}
M\left( T\right) =\underset{X\longrightarrow +\infty }{\lim }\frac{1}{X}%
\underset{0}{\overset{X}{\int }}\left( T\ast \varphi \right) \left( h\right)
dh.
\end{equation*}%
We have%
\begin{equation*}
M_{g}\left( i_{ap}\left( T\right) \right) -M\left( T\right) =M\left( T\ast
\left( \rho _{\varepsilon }-\varphi \right) \right) ,\forall \varepsilon \in
I.
\end{equation*}%
In view of formula $\left( VI.9.2\right) $ of \cite{006}$,$ we obtain%
\begin{equation*}
M_{g}\left( i_{ap}\left( T\right) \right) -M\left( T\right) =M\left(
T\right) \underset{\mathbb{R}}{\int }\left( \rho _{\varepsilon }\left(
x\right) -\varphi \left( x\right) \right) dx,
\end{equation*}%
as $\forall \varepsilon \in I,\underset{\mathbb{R}}{\int }\left( \rho
_{\varepsilon }\left( x\right) -\varphi \left( x\right) \right) dx=0,$ then $%
M_{g}\left( i_{ap}\left( T\right) \right) =M\left( T\right) $ in $\mathbb{C}%
. $
\end{proof}

\begin{remark}
We can introduce a new association within $\mathcal{G}_{ap}$ with the aid of
the generalized mean value $M_{g}.$
\end{remark}

\begin{definition}
A generalized trigonometric polynomial is a generalized function\textrm{\ }$%
\left[ \left( P_{\varepsilon }\right) _{\varepsilon }\right] ,$ where
\begin{equation*}
P_{\varepsilon }\left( x\right) =\underset{n=1}{\overset{l}{\sum }}%
c_{\varepsilon ,n}e^{i\lambda _{\varepsilon ,n}x}\text{ \ },\text{ \ }\left(
c_{\varepsilon ,n}\right) _{\varepsilon }\in \widetilde{\mathbb{C}}\text{ \
and }\left( \lambda _{\varepsilon ,n}\right) _{\varepsilon }\in \widetilde{%
\mathbb{R}}\text{, }n=1,...,l.
\end{equation*}
\end{definition}

\begin{proposition}
Every generalized trigonometric polynomial is an almost periodic generalized
function.
\end{proposition}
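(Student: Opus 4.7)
The goal is to exhibit a specific representative $(P_{\varepsilon})_{\varepsilon}$ of the generalized trigonometric polynomial and verify directly that it belongs to $\mathcal{M}_{ap}$, whence its class is automatically an element of $\mathcal{G}_{ap}=\mathcal{M}_{ap}/\mathcal{N}_{ap}$. Two things must be checked: (a) for each $\varepsilon\in I$, $P_{\varepsilon}\in\mathcal{B}_{ap}$; and (b) for every $k\in\mathbb{Z}_{+}$, there exists $m\in\mathbb{Z}_{+}$ with $|P_{\varepsilon}|_{k,\infty}=O(\varepsilon^{-m})$ as $\varepsilon\to 0$.

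\textbf{Step 1 (condition (a)).} Fix $\varepsilon$ and $n$. The pure exponential $x\mapsto e^{i\lambda_{\varepsilon,n}x}$ is a bounded smooth periodic function, hence lies in $\mathcal{D}_{L^{\infty}}\cap\mathcal{C}_{ap}$; each of its derivatives equals $(i\lambda_{\varepsilon,n})^{j}e^{i\lambda_{\varepsilon,n}x}$, again bounded periodic, so still in $\mathcal{C}_{ap}$. Thus $e^{i\lambda_{\varepsilon,n}(\cdot)}\in\mathcal{B}_{ap}$. Invoking Proposition \ref{prpr001} (i), which states that $\mathcal{B}_{ap}$ is a (differential) subalgebra of $\mathcal{D}_{L^{\infty}}$ and in particular a $\mathbb{C}$-vector space, the finite linear combination $P_{\varepsilon}=\sum_{n=1}^{l}c_{\varepsilon,n}\,e^{i\lambda_{\varepsilon,n}(\cdot)}$ again lies in $\mathcal{B}_{ap}$.

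\textbf{Step 2 (condition (b)).} A direct differentiation yields $P_{\varepsilon}^{(j)}(x)=\sum_{n=1}^{l}c_{\varepsilon,n}(i\lambda_{\varepsilon,n})^{j}e^{i\lambda_{\varepsilon,n}x}$, so that $\sup_{x\in\mathbb{R}}|P_{\varepsilon}^{(j)}(x)|\leq\sum_{n=1}^{l}|c_{\varepsilon,n}||\lambda_{\varepsilon,n}|^{j}$. The moderateness assumptions $(c_{\varepsilon,n})_{\varepsilon}\in\widetilde{\mathbb{C}}$ and $(\lambda_{\varepsilon,n})_{\varepsilon}\in\widetilde{\mathbb{R}}$ provide, after taking the maximum over the finitely many indices $n=1,\dots,l$, exponents $a,b\in\mathbb{Z}_{+}$ such that $|c_{\varepsilon,n}|=O(\varepsilon^{-a})$ and $|\lambda_{\varepsilon,n}|=O(\varepsilon^{-b})$ uniformly in $n$. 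Summing over $j\leq k$ then produces $|P_{\varepsilon}|_{k,\infty}=O(\varepsilon^{-(a+kb)})$, so $m:=a+kb$ discharges condition (b).

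\textbf{Main obstacle.} There is essentially no obstacle; this is a bookkeeping verification. The only point requiring care is that the moderate exponent $m$ must be allowed to depend on the order $k$ of differentiation, because each derivative brings down an extra factor $|\lambda_{\varepsilon,n}|$ which itself grows like $\varepsilon^{-b}$. This is precisely what the definition of $\mathcal{M}_{ap}$ permits (the quantifier order is ``$\forall k\,\exists m$''), so the argument goes through with no further subtlety.
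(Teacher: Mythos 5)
Your proof is correct and follows essentially the same route as the paper: for each fixed $\varepsilon$ one checks $P_{\varepsilon}\in\mathcal{B}_{ap}$, and then the estimate $|P_{\varepsilon}|_{k,\infty}\leq\sum_{j\leq k}\sum_{n=1}^{l}|c_{\varepsilon,n}||\lambda_{\varepsilon,n}|^{j}=O(\varepsilon^{-(a+kb)})$ gives moderateness with an exponent depending on $k$, exactly as in the paper's proof. The only thing the paper adds is a closing remark that null representatives of the generalized coefficients and frequencies produce a net in $\mathcal{N}_{ap}$ (independence of representatives), which the statement as phrased does not require you to verify.
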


\begin{proof}
Let $P_{\varepsilon }\left( x\right) =\underset{n=1}{\overset{l}{\sum }}%
c_{\varepsilon ,n}e^{i\lambda _{\varepsilon ,n}x}$ where $\left(
c_{\varepsilon ,n}\right) _{\varepsilon }\in \widetilde{\mathbb{C}}$ \ and $%
\left( \lambda _{\varepsilon ,n}\right) _{\varepsilon }\in \widetilde{%
\mathbb{R}}$, $n=1,...,l,$ we have%
\begin{equation*}
\forall \varepsilon \in I,\text{ }\underset{n=1}{\overset{l}{\sum }}%
c_{\varepsilon ,n}e^{i\lambda _{\varepsilon ,n}x}\in \mathcal{B}_{ap},
\end{equation*}%
moreover $\exists m\in \mathbb{Z}_{+},\exists m^{\prime }\in \mathbb{Z}%
_{+},\left\vert \lambda _{\varepsilon ,n}\right\vert =O\left( \varepsilon
^{-m}\right) $ and $\left\vert c_{\varepsilon ,n}\right\vert =O\left(
\varepsilon ^{-m^{\prime }}\right) ,\varepsilon \longrightarrow 0.$
Consequently $\forall k\in \mathbb{Z}_{+},$ $\exists C>0$ such that%
\begin{eqnarray*}
\left\vert P_{\varepsilon }\right\vert _{k,\infty } &\leq &\underset{j\leq k}%
{\sum }\underset{n=1}{\overset{l}{\sum }}\left\vert \lambda _{\varepsilon
,n}\right\vert ^{j}\left\vert c_{\varepsilon ,n}\right\vert \\
&\leq &\left( \underset{j\leq k}{\sum }\left\vert \lambda _{\varepsilon
,1}\right\vert ^{j}+\left\vert \lambda _{\varepsilon ,2}\right\vert
^{j}+...+\left\vert \lambda _{\varepsilon ,l}\right\vert ^{j}\right)
c^{\prime }\varepsilon ^{-m^{\prime }} \\
&\leq &k\left( c\varepsilon ^{-m}+c^{2}\varepsilon
^{-2m}+...+c^{k}\varepsilon ^{-km}\right) c^{\prime }\varepsilon
^{-m^{\prime }} \\
&\leq &C\varepsilon ^{-m^{\prime \prime }},\text{ where }C=c^{\prime
}kl\left( c+c^{2}+...+c^{k}\right) ,\text{ }m^{\prime \prime }=km+m^{\prime
},
\end{eqnarray*}%
this show that $\left( P_{\varepsilon }\right) _{\varepsilon }\in \mathcal{M}%
_{ap}.$ In a similar way we show that if $\left( \lambda _{\varepsilon
,n}\right) _{\varepsilon }\in \mathcal{N}\left[ \mathbb{R}\right] $ and $%
\left( c_{\varepsilon ,n}\right) _{\varepsilon }\in \mathcal{N}\left[
\mathbb{C}\right] ,$ then $\left( P_{\varepsilon }\left( x\right) \right)
_{\varepsilon }\in \mathcal{N}_{ap}.$
\end{proof}

Let $u\in \mathcal{G}_{ap}$ and $\widetilde{\lambda }=\left[ \left( \lambda
_{\varepsilon }\right) _{\varepsilon }\right] \in \widetilde{\mathbb{R}}$,
then $ue^{-i\widetilde{\lambda }x}=\left( u_{\varepsilon }e^{-i\lambda
_{\varepsilon }x}\right) _{\varepsilon }\in \mathcal{G}_{ap},$ so the
generalized mean value $M_{g}\left( ue^{-i\widetilde{\lambda }x}\right) $ is
a well defined element of $\widetilde{\mathbb{C}}$. Define%
\begin{equation*}
a_{\widetilde{\lambda }}\left( u\right) =M_{g}\left( ue^{-i\widetilde{%
\lambda }x}\right) ,
\end{equation*}%
and the generalized spectra of $u,$%
\begin{equation*}
\Lambda _{g}\left( u\right) =\left\{ \widetilde{\lambda }\in \widetilde{%
\mathbb{R}}:a_{\widetilde{\lambda }}\left( u\right) \neq 0\text{ in }%
\widetilde{\mathbb{C}}\right\} .
\end{equation*}

\begin{remark}
If a function or a distribution $u$ is almost periodic, then its spectra is
at most countable, see \cite{001}\ and \cite{006}$.$
\end{remark}

\begin{proposition}
\label{prpr002}Let $P=\left[ \left( \underset{n=1}{\overset{l}{\sum }}%
c_{\varepsilon ,n}e^{i\lambda _{\varepsilon ,n}x}\right) _{\varepsilon }%
\right] $ be a generalized trigonometric polynomial, then%
\begin{equation*}
\Lambda _{g}\left( P\right) =\left\{ \left[ \left( \lambda _{\varepsilon
,n}\right) _{\varepsilon }\right] :n=1,...,l\right\} .
\end{equation*}
\end{proposition}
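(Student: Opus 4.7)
The plan is to compute $a_{\widetilde{\lambda}}(P)$ explicitly for an arbitrary $\widetilde{\lambda}=[(\lambda_{\varepsilon})_{\varepsilon}] \in \widetilde{\mathbb{R}}$, exploiting the classical Bohr identity $M(e^{i\mu x}) = 1$ if $\mu = 0$ and $M(e^{i\mu x}) = 0$ otherwise. Since for every fixed $\varepsilon$ the function $P_{\varepsilon}(x)e^{-i\lambda_{\varepsilon}x} = \sum_{n=1}^{l} c_{\varepsilon,n} e^{i(\lambda_{\varepsilon,n}-\lambda_{\varepsilon})x}$ is a genuine Bohr trigonometric polynomial, taking the classical mean termwise produces the explicit representative
\begin{equation*}
g_{\varepsilon} := \lim_{X\to +\infty} \frac{1}{X}\int_0^X P_{\varepsilon}(x) e^{-i\lambda_{\varepsilon}x}\, dx = \sum_{\substack{1\le n\le l \\ \lambda_{\varepsilon,n}=\lambda_{\varepsilon}}} c_{\varepsilon,n},
\end{equation*}
so that $a_{\widetilde{\lambda}}(P) = [(g_{\varepsilon})_{\varepsilon}]$ in $\widetilde{\mathbb{C}}$. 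Everything in the proposition reduces to reading off when this class is or is not zero.

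For the inclusion $\supset$, fix $n_0 \in \{1,\ldots,l\}$ and pick the specific representative $\lambda_{\varepsilon} := \lambda_{\varepsilon,n_0}$ of $\widetilde{\lambda}_{n_0} := [(\lambda_{\varepsilon,n_0})_{\varepsilon}]$. Then the index $n_0$ belongs to the summation set for every $\varepsilon$, and, assuming as usual that $P$ is written in reduced form (so the $\widetilde{\lambda}_n$'s are pairwise distinct in $\widetilde{\mathbb{R}}$ and the coefficients are non-negligible), only the $n=n_0$ contribution survives, giving $a_{\widetilde{\lambda}_{n_0}}(P) = [(c_{\varepsilon,n_0})_{\varepsilon}] \neq 0$ in $\widetilde{\mathbb{C}}$.

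For the reverse inclusion $\subset$, suppose $\widetilde{\lambda}\notin\{\widetilde{\lambda}_1,\ldots,\widetilde{\lambda}_l\}$. The hypothesis means $(\lambda_{\varepsilon}-\lambda_{\varepsilon,n})_{\varepsilon}\notin \mathcal{N}[\mathbb{R}]$ for every $n$. From this I would deduce that one may adjust the representative of $\widetilde{\lambda}$, within its $\mathcal{N}[\mathbb{R}]$-equivalence class, so that $\lambda_{\varepsilon}\neq\lambda_{\varepsilon,n}$ for all $n$ and all sufficiently small $\varepsilon$; consequently the summation index set in the formula for $g_{\varepsilon}$ becomes empty for $\varepsilon$ near $0$, and $(g_{\varepsilon})_{\varepsilon}\in\mathcal{N}[\mathbb{C}]$, so $a_{\widetilde{\lambda}}(P)=0$.

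The delicate point—and the only place where the proof is more than a formal manipulation—is the representative-sensitivity in the last step: the pointwise relation "$\lambda_{\varepsilon,n}=\lambda_{\varepsilon}$" depends on the chosen representative of $\widetilde{\lambda}$, while $a_{\widetilde{\lambda}}(P)$ does not. The obstacle is therefore to verify that any coincidences $\lambda_{\varepsilon_k}=\lambda_{\varepsilon_k,n}$ along subsequences $\varepsilon_k\to 0$ (which may occur even when $\widetilde{\lambda}\neq\widetilde{\lambda}_n$) contribute only to a sequence in $\mathcal{N}[\mathbb{C}]$. This is handled by using the freedom to modify $\lambda_{\varepsilon}$ by a negligible quantity so as to eliminate these accidental coincidences, and by restricting attention to reduced representations of the generalized trigonometric polynomial.
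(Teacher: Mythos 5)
Your starting formula is right: for each fixed $\varepsilon$ the Bohr mean gives $g_{\varepsilon}=\sum_{n:\lambda_{\varepsilon,n}=\lambda_{\varepsilon}}c_{\varepsilon,n}$, and the whole question is indeed whether this net is negligible. The genuine gap is in how you dispose of the ``delicate point.'' Your argument for the inclusion $\subset$ rests on the claim that $a_{\widetilde{\lambda}}(P)$ does not depend on the chosen representative of $\widetilde{\lambda}$, so that you may perturb $\lambda_{\varepsilon}$ by a negligible quantity to kill the accidental coincidences. That claim is false: the Bohr mean of a character is discontinuous in the frequency ($M(e^{i\mu x})=1$ for $\mu=0$ and $=0$ for every $\mu\neq 0$), so replacing $\lambda_{\varepsilon}=\lambda_{\varepsilon,n_0}$ by $\lambda_{\varepsilon}+e^{-1/\varepsilon}$ changes $g_{\varepsilon}$ from $c_{\varepsilon,n_0}$ to $0$, i.e.\ the class $[(g_{\varepsilon})_{\varepsilon}]$ genuinely changes. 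Consequently showing that the \emph{adjusted} representative yields a negligible net says nothing about the original one, and the step ``therefore $a_{\widetilde{\lambda}}(P)=0$'' does not follow. In fact the inclusion you are trying to prove is false in the literal reading you adopt (fixed index $n$): for $P_{\varepsilon}(x)=1+e^{ix}$, any $\widetilde{\lambda}=[(\lambda_{\varepsilon})_{\varepsilon}]$ with $\lambda_{\varepsilon}\in\{0,1\}$ switching infinitely often near $0$ satisfies $g_{\varepsilon}=1$ for all $\varepsilon$, hence lies in $\Lambda_{g}(P)$, yet it is neither $[(0)_{\varepsilon}]$ nor $[(1)_{\varepsilon}]$; this is exactly the referee's example recorded in the remark following the proposition, which shows the spectrum must be described by $\varepsilon$-dependent selections $n(\varepsilon)$, not by the $l$ fixed classes alone. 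Any correct proof has to work with the chosen representative and with such mixed selections, not argue them away.

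The direction $\supset$ also needs more than your ``reduced form'' hypothesis (classes $\widetilde{\lambda}_{n}$ pairwise distinct, coefficients non-negligible). Distinctness of the classes does not prevent $\lambda_{\varepsilon,n}=\lambda_{\varepsilon,n_0}$ on a set of $\varepsilon$ accumulating at $0$, and on that set cancellations can occur: e.g.\ with $c_{\varepsilon,1}=1$, $c_{\varepsilon,2}=-1$ on such a set (and the non-negligibility of $c_{\varepsilon,1}$ witnessed only there), one gets $(g_{\varepsilon})_{\varepsilon}\in\mathcal{N}\left[\mathbb{C}\right]$ even though $c_{\varepsilon,1}$ is not negligible. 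To make the term-by-term survival argument work you need a pointwise separation hypothesis (e.g.\ $\lambda_{\varepsilon,n}\neq\lambda_{\varepsilon,m}$ for all $\varepsilon$, or better a lower bound by a power of $\varepsilon$) together with invertibility, not mere non-negligibility, of the coefficients. The paper itself offers only ``a direct computation gives the result,'' so there is no argument to compare against; but as it stands your proposal proves neither inclusion, and the obstruction you correctly identified is not removable by changing representatives.
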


\begin{proof}
A direct computation gives the result.
\end{proof}

\begin{remark}
We are indebted to the referee for the following example of an almost
periodic generalized function having uncountable generalized spectra. Let $u=%
\left[ \left( u_{\varepsilon }\right) _{\varepsilon }\right] ,$ $%
u_{\varepsilon }\left( x\right) =1+e^{ix},\forall \varepsilon \in I,$ by
proposition $\left( \ref{prpr002}\right) ,$ $\Lambda _{g}\left( u\right)
=\left\{ \left[ \left( A_{\varepsilon }\right) _{\varepsilon }\right]
:A_{\varepsilon }=\left\{ 0,1\right\} \right\} ,$ as the set $\left\{ \left(
\lambda _{\varepsilon }\right) _{\varepsilon }:\lambda _{\varepsilon
}=\left\{ 0,1\right\} ,\varepsilon \in I\right\} \subset \Lambda _{g}\left(
u\right) $ is uncountable, then the generalized spectra of $u$ is too.
\end{remark}

\end{document}